\newtheorem{theorem}{Theorem}[section]
\newtheorem{thm}[theorem]{Theorem}
\newtheorem{fact}[theorem]{Fact}
\newtheorem{coro}[theorem]{Corollary}
\newtheorem{lem}[theorem]{Lemma}
\newtheorem{quest}[theorem]{Question}
\newtheorem{problem}[theorem]{Problem}
\newtheorem{prop}[theorem]{Proposition}
\newenvironment{exam}[1][Example.]{\begin{trivlist}
\item[\hskip \labelsep {\textit{#1}}]}{\end{trivlist}}
\def\sqw{\hbox{\rlap{\leavevmode\raise.3ex\hbox{$\sqcap$}}$%
\sqcup$}}
\newcommand{\N}{\ensuremath{{\mathbb N}}}
\newcommand{\Z}{\ensuremath{\mathbb Z}}
\newcommand{\defini}{\textbf}
\newcommand{\Cay}{\ensuremath{\mathsf{Cay}}}
\newcommand{\R}{\ensuremath{\mathbb R}}
\newcommand{\zigzagfree}{``zigzag-free''}
\newcommand{\dom}{\ensuremath{\mathsf{dom}}}
\newcommand{\Geod}{\mathsf{Geod}}
\newcommand{\diam}{\mathsf{diam}}
\newcommand{\fleche}{\rightarrow}
\newcommand{\saut}{\vspace{0.25cm}}
\newenvironment{proof}{  
    \vspace*{-.4em}  {\it Proof.}%
}{
    \hfill\sqw\vspace*{.5em}
}
\author{S\'ebastien \sc{Martineau}\footnote{E-mail: sebastien.martineau@weizmann.ac.il.}\\
\\
{\it The Weizmann Institute of Science}}
\title{Locally infinite graphs and symmetries}
\begin{document}
\maketitle

\begin{abstract}
When one studies geometric properties of graphs, local finiteness is a common implicit assumption, and that of transitivity a frequent explicit one. By compactness arguments, local finiteness guarantees several regularity properties. It is generally easy to find counterexamples to such regularity results when the assumption of local finiteness is dropped. The present work focuses on the following problem: determining whether these regularity properties still hold when local finiteness is replaced by an assumption of transitivity.

After recalling the locally finite situation, we show that there are Cayley graphs of  $\bigoplus_{n\geq2} \Z/n\Z$ and $\Z$ (with infinite generating systems) that have infinite diameter but do not contain any infinite geodesic ray. We also introduce a notion of generalised diameter. The generalised diameter of a graph is either an ordinal or $\infty$ and captures the extension properties of geodesic paths. It is a finite ordinal if and only if the usual diameter is finite, and in that case the two notions agree. Besides, the generalised diameter is $\infty$ if and only if the considered graph contains an infinite geodesic ray. We show that there exist Cayley graphs of abelian groups of arbitrary generalised diameter.

Finally, we build Cayley graphs of abelian groups that have isomorphic balls of radius $n$ for every $n$ but are not globally isomorphic. This enables us to construct a \emph{non}-transitive graph such that for every $n$ and any vertices $u$ and $v$, the ball of centre $u$ and radius $n$ is isomorphic to that of centre $v$ and same radius.
\end{abstract}

\section{Introduction}

Graphs are fundamental objects in discrete mathematics. They may encode various concepts, such as constraints or geometry. This paper will focus on the geometric perspective. From this point of view, it is common to work under the assumption of local finiteness: a graph is \defini{locally finite} if each of its vertices has finitely many neighbours. Another important notion is that of transitive graphs: a graph is \defini{(vertex-)transitive} (or \defini{homogeneous}) if its automorphism group acts transitively on its vertex-set. See e.g.\ \cite{benjaministflour}.

It is well-known that local finiteness implies some form of compactness, which guarantees that the geometric study of locally finite graphs is in some sense well-behaved. This paper aims at showing that the situation gets wilder when the local finiteness assumption is dropped, \emph{even if} one assumes that the graphs under study are transitive (or weakly transitive, see p.~\pageref{defweaktrans}).

In Section~\ref{rappels}, for completeness and in order to fix conventions, we recall the vocabulary of graph theory that we will need.
In Section~\ref{locfin}, we recall the locally finite situation.
In Section~\ref{endintro}, after a brief exposition of our motivations, we state our questions and results. These results are established in the subsequent sections.

\subsection{Vocabulary of graph theory}

\label{rappels}


\newcommand{\chemin}{\kappa}

{\small
We denote by $\mathbb N$ the set of all non-negative integers and by $\mathbb{N}^\star$ that of positive integers.
}

\vspace{0.2cm}

Given a set $X$, denote by $X \choose 2$ the set whose elements are the subsets of $X$ that contain exactly 2 elements. A \defini{``graph''} is an ordered pair $\mathcal{G}=(V,E)$, where $E\subset {V\choose 2}$. The elements of $V$ are called the \defini{vertices} of $\mathcal{G}$, and the elements of $E$ are called the \defini{edges} of $\mathcal{G}$. Two vertices $u$ and $v$ are said to be \defini{adjacent} (or \defini{neighbours}) if $\{u,v\}$ is an edge. The elements of an edge are called its \defini{endpoints}. 
A \defini{path} is a map $\chemin:I\to V$ such that
\begin{itemize}
\item $I$ is a non-empty subset of $\Z$;
\item $I$ is an ``integer-interval'': $\forall m,n\in I~, \forall k \in \Z,~m\leq k \leq n \implies k\in I$;
\item $\forall n\in I,~n+1\in I \implies \{\chemin(n),\chemin(n+1)\}\in E$.
\end{itemize}
The \defini{length} of $\chemin : I \to V$ is $``\sup I - \inf I\text{''} \in \N\cup\{\infty\}$.
If this length is finite, then the path is said to be \defini{finite}, and it is said to \defini{connect} $\chemin(\min I)$ and $\chemin(\max I)$. A ``graph'' is \defini{connected} if any two vertices are connected by some finite path. In this paper, a \defini{graph} is a connected ``graph'' with a non-empty vertex set.
A graph is \defini{locally finite} if every vertex has finitely many neighbours; this number need not be bounded.
A \defini{rooted graph} is the data of a graph together with that of a vertex of this graph, which is referred to as the \defini{root}.

Given a graph $\mathcal{G}=(V,E)$, one defines a distance on $V$ --- the \defini{graph distance} --- by setting $d(u,v)$ to be the minimal length of a finite path connecting $u$ and $v$. Given $r\in \R_+$ and $u \in V$, the \defini{ball} $B:=B_\mathcal{G}(u,r)$ of radius $r$ and with centre $u$ is the set of the elements $v$ satisfying $d(u,v)\leq r$. It can be seen as a graph by setting its edge-set to be $E\cap {B\choose 2}$. It can be seen as a rooted graph by rooting it at $u$.
The \defini{diameter} of $\mathcal{G}=(V,E)$ is $\sup \{d(u,v):u,v\in V\}\in \N\cup\{\infty\}$.

A \defini{(graph) isomorphism} from a graph $\mathcal{G}=(V,E)$ to a graph $\mathcal{G}'=(V',E')$ is a bijection $\varphi : V\to V'$ such that
$$
\forall u,v\in V,~\{u,v\}\in E \iff \{\varphi(u),\varphi(v)\}\in E'.
$$
Two graphs are \defini{isomorphic} if there is an isomorphism from the first one to the second one. This defines an equivalence relation on the class of graphs. A \defini{(graph) automorphism} of a graph is an isomorphism from itself to itself. These notions extend to rooted graphs by adding the condition that $\varphi$ maps the root of $\mathcal{G}$ to that of $\mathcal{G}'$. Graph automorphisms of a graph form a group under composition, which is called the \defini{automorphism group} of this graph.

A graph $\mathcal G$ is said to be \defini{(vertex-)transitive} (or \defini{homogeneous}) if the natural action of its automorphism group on its vertex-set is transitive, i.e.\ if for any vertices $u$ and $v$, there is a graph automorphism $\varphi$ of $\mathcal G$ such that $\varphi(u)=v$. One says that $\mathcal G$ is \defini{quasi-transitive} if its automorphism group acts on its vertices with finitely many orbits. Given a group $G$ and a generating subset $S$ of $G$, the \defini{Cayley graph} $\Cay(G,S)$ associated with $(G,S)$ is the ``graph'' defined by taking $G$ to be the vertex-set and by declaring two \emph{distinct} vertices $g$ and $h$ to be adjacent if and only if $g^{-1}h\in S\cup S^{-1}$. This ``graph'' is actually a transitive \emph{graph}. The action of $G$ on itself by left-multiplication is free, transitive, and by graph automorphisms of $\Cay(G,S)$. Notice that contrary to many authors, we do \emph{not} assume $S$ to be finite.

A path $\chemin : I \to V$ is \defini{geodesic} if it satisfies
$$
\forall m,n \in I,~d(\chemin(m),\chemin(n))=|m-n|.
$$ A path is \defini{infinite} if its length is infinite. A path is \defini{bi-infinite} if it satisfies $I=\Z$.
A \defini{ray} (or \defini{infinite ray}) is a path $\chemin : \N \to V$. A path $\chemin$ \defini{starts at} some vertex $u$ if $\chemin(0)$ is well-defined and equal to $u$.


\newcommand{\undef}{\textsf{``undefined''}}

Given a graph $\mathcal{G}=(V,E)$, we see the set of its paths as a subset of $(V\cup \{\undef\})^\Z$. Endowing $V\cup \{\undef\}$ with the discrete topology, the product topology on $(V\cup \{\undef\})^\Z$ induces a topology on the space of the paths of $\mathcal{G}$. This will be the only topology we will consider on this space.

\subsection{Regularity for locally finite graphs}

\label{locfin}

Even though some of the following results are seldom stated in this way, no result of Section~\ref{locfin} can be considered to be new. However, I believe it is useful to present these results and their proofs in order to provide context for our work in the non locally finite setup. 

\subsubsection{Infinite geodesic paths}

The power of local finiteness comes from the following observation:

\begin{fact}\label{observa}
In a locally finite rooted graph, the set consisting of the paths starting at the root is compact.
\end{fact}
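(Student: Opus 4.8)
The plan is to realise the set of root-starting paths as a \emph{closed} subset of a \emph{compact} product space, the compactness of the latter coming from Tychonoff's theorem together with the finiteness-of-balls property that local finiteness provides. Write $o$ for the root and abbreviate \undef\ by $\star$, so that we work inside $(V\cup\{\star\})^{\Z}$.

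First I would record the single place where local finiteness is used: in a locally finite graph every ball $B_{\mathcal{G}}(o,n)$ of finite radius is a finite set. This follows by induction on $n$, since $B_{\mathcal{G}}(o,0)=\{o\}$ is finite and $B_{\mathcal{G}}(o,n+1)$ is contained in the union of $B_{\mathcal{G}}(o,n)$ with the neighbourhoods of its finitely many elements, each neighbourhood being finite. Next, observe that a path $\kappa$ starting at the root has $0$ in its domain $I$ and, for every $k\in I$, the restriction of $\kappa$ to the integers between $0$ and $k$ is a finite path of length $|k|$ joining $o$ to $\kappa(k)$; hence $d(o,\kappa(k))\le |k|$, i.e.\ $\kappa(k)\in B_{\mathcal{G}}(o,|k|)$. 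Consequently every root-starting path lies in
$$
K := \prod_{k\in\Z}\bigl(B_{\mathcal{G}}(o,|k|)\cup\{\star\}\bigr),
$$
a product of \emph{finite} discrete spaces. By Tychonoff's theorem $K$ is compact, and the subspace topology it inherits from $(V\cup\{\star\})^{\Z}$ is exactly its product topology.

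It then remains to show that the set $P$ of root-starting paths is closed in $(V\cup\{\star\})^{\Z}$, since $P\subseteq K$ and a closed subset of a compact space is compact. I would describe $P$ by three conditions on $f\in(V\cup\{\star\})^{\Z}$ and check that each cuts out a closed set. The condition $f(0)=o$ depends on one coordinate and is clopen. The condition that $\{k:f(k)\neq\star\}$ be an integer-interval is the complement of the event ``there exist $m<k<n$ with $f(m)\neq\star$, $f(k)=\star$ and $f(n)\neq\star$'', which, being a union over triples $(m,k,n)$ of finite-coordinate cylinder sets, is open; so the condition is closed. Finally the adjacency requirement, namely that for every $n$ with $f(n)\neq\star$ and $f(n+1)\neq\star$ one has $\{f(n),f(n+1)\}\in E$, is an intersection over $n$ of two-coordinate (hence clopen) conditions, and is therefore closed. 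Intersecting the three, $P$ is closed, and compactness follows.

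The argument is essentially bookkeeping, so I do not expect a genuine obstacle; the only point deserving care is the verification that ``the domain is an interval'' is a closed condition, as it is the one constraint expressed not by a single cylinder but by a countable intersection of complements of cylinders. Everything else reduces to Tychonoff applied to the finite balls, which is precisely where local finiteness enters and without which the conclusion would fail.
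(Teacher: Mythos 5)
Your proof is correct and follows essentially the same route as the paper's: realise the root-starting paths inside the product $\prod_{k\in\Z}\bigl(B_{\mathcal{G}}(o,|k|)\cup\{\undef\}\bigr)$, which is compact by Tychonoff's theorem and local finiteness, and then cut them out by coordinatewise (cylinder) conditions that are closed. If anything, your version is marginally more careful than the paper's, since you explicitly impose the closed condition $f(0)=o$ and verify by induction that the balls are finite, both of which the paper leaves implicit.
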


\begin{small}
\begin{proof}
The set $\mathfrak P$ of the paths in $\mathcal{G}=(V,E)$ that start at $o$ is a subset of $X := \prod_{k\in \Z} B(o,|k|)\cup\{\undef\}$, endowed with the product of discrete topologies. Since $\mathcal{G}$ is locally finite, every $B(o,|k|)$ is finite hence compact. By Tychonoff's Theorem\footnote{No axiom of choice is needed here, as the product is countable and the factors are metrisable. The reader may prefer to proceed by diagonal extraction instead of resorting to Tychonoff's Theorem.}, $X$ is compact. For an element $x$ of $X$, being a path is the conjunction over $(k, l,m)$ in $\Z^2$ such that $k<l<m$ of the following closed conditions:
\begin{itemize}
\item $x(l)=\undef\implies (x(k)=\undef\text{ or }x(m)=\undef)$;
\item $\{x(k),x(k+1)\}\cap\{\undef\}=\varnothing\implies \{x(k),x(k+1)\}\in E$.
\end{itemize}
As a result, the set $\mathfrak P$ is closed in the compact space $X$, hence compact.
\end{proof}
\end{small}

The following result, which is useful in the locally finite framework, does not require local finiteness. We leave its proof to the reader.

\begin{fact}
\label{closedfact}
In any rooted graph, the geodesic paths starting at the root form a closed subset of the space of paths.
\end{fact}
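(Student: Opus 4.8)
The plan is to mimic the argument used for Fact~\ref{observa}, but now seeking only closedness rather than compactness. Recall that the space of paths is a subspace of $X := (V\cup\{\undef\})^\Z$ endowed with the product of discrete topologies, and that a path $\chemin$ with domain $I$ is encoded as the function $\Z \to V\cup\{\undef\}$ that agrees with $\chemin$ on $I$ and equals $\undef$ elsewhere, so that $\{k\in\Z : \chemin(k)\neq\undef\} = I$. Since each factor is discrete, every projection $\pi_k : \chemin\mapsto\chemin(k)$ is continuous, and hence every cylinder set constraining finitely many coordinates is clopen. I would first dispose of the ``starting at $o$'' requirement, which is simply the single cylinder $\{\chemin : \chemin(0)=o\}$ and is therefore clopen.

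The heart of the matter is the geodesic condition. For a path $\chemin$, having $m,n\in I$ is exactly the statement that $\chemin(m)\neq\undef$ and $\chemin(n)\neq\undef$, so being geodesic amounts to the conjunction, over all pairs $(m,n)\in\Z^2$, of the condition
$$
G_{m,n}:\quad \chemin(m)=\undef \ \text{ or }\ \chemin(n)=\undef \ \text{ or }\ d(\chemin(m),\chemin(n))=|m-n|.
$$
I claim each $G_{m,n}$ cuts out a closed subset of $X$. Indeed its complement is
$$
\bigcup_{\substack{a,b\in V \\ d(a,b)\neq|m-n|}} \{\chemin : \chemin(m)=a,\ \chemin(n)=b\},
$$
a union of two-coordinate cylinders, each of which is open; hence the complement is open and $G_{m,n}$ is closed. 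Note that this union may well be infinite, which is harmless here: unlike in Fact~\ref{observa}, only closedness is sought, so an arbitrary union of open sets suffices and no local finiteness is needed.

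It then remains to intersect. The set $C\subseteq X$ defined by all the $G_{m,n}$ together with the ``starting at $o$'' cylinder is closed, and its trace $C\cap\mathfrak{P}$ on the space $\mathfrak{P}$ of paths is precisely the set of geodesic paths starting at $o$; being the intersection of a closed set of $X$ with $\mathfrak{P}$, it is closed in $\mathfrak{P}$.

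The only point that deserves a little care — and presumably the reason the statement is left to the reader rather than being a triviality — is the bookkeeping of the symbol $\undef$: one must check that the geodesic requirement, literally quantified over $m,n\in I$, translates faithfully into the family $(G_{m,n})_{(m,n)\in\Z^2}$ once $I$ is recovered as the set of coordinates where $\chemin$ takes a value in $V$. Pairs $(m,n)$ with $m\notin I$ or $n\notin I$ impose no genuine constraint, and correspondingly $G_{m,n}$ is then automatically satisfied, so that no spurious condition is introduced and none is dropped.
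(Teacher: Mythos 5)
Your proof is correct, and it follows exactly the route the paper intends: Fact~\ref{closedfact} is stated with its proof left to the reader, and your verification that the geodesic requirement is a conjunction over pairs $(m,n)$ of coordinate-wise closed conditions (whose complements are unions of finite-coordinate cylinders), together with the clopen cylinder $\{\chemin:\chemin(0)=o\}$, mirrors precisely the closed-conditions argument the paper gives for Fact~\ref{observa}, with your correct observation that no local finiteness is needed since only closedness, not compactness, is at stake.
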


\begin{coro}\label{georay}
Every locally finite graph of infinite diameter contains an infinite geodesic path.
\end{coro}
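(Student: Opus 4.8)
The plan is to combine the two preceding Facts through a compactness-and-extraction argument. First I would fix an arbitrary root $o$ and observe that infinite diameter forces the distances to $o$ to be unbounded: if one had $d(o,v)\le R$ for every vertex $v$, then the triangle inequality would give $d(u,v)\le 2R$ for all $u,v$, contradicting that the diameter is infinite. Hence for every $n\in\N$ there is a vertex $v_n$ with $d(o,v_n)\ge n$.

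Next I would pick, for each $n$, a finite geodesic path from $o$ to $v_n$ and restrict its domain to $\{0,\dots,n\}$; since any sub-path of a geodesic is geodesic, this produces a geodesic path $\gamma_n$ starting at $o$ and of length exactly $n$. Viewing each $\gamma_n$ as an element of the space of paths starting at $o$ (so that $\gamma_n(k)=\undef$ for $k<0$ and for $k>n$), Fact~\ref{observa} tells us this space is compact; being a countable product of finite discrete spaces it is metrisable, hence sequentially compact. Thus $(\gamma_n)_n$ admits a subsequence converging to some path $\gamma$ starting at $o$.

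It remains to check that $\gamma$ is an infinite geodesic path. That it is geodesic is immediate from Fact~\ref{closedfact}: the geodesic paths starting at $o$ form a closed set, and $\gamma$ is a limit of such paths. To see that $\gamma$ is infinite, I would fix any $k\ge 0$; along the convergent subsequence, for all large indices $n$ one has $n\ge k$, so $\gamma_n(k)$ is a genuine vertex lying in the finite set $B(o,k)$. As convergence in the discrete topology means eventual equality, $\gamma(k)$ equals this vertex, and in particular $\gamma(k)\ne\undef$. Therefore $\gamma$ is defined on all of $\N$, i.e.\ it is an infinite ray.

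The only genuinely delicate point is this last one: a priori the limit of longer and longer geodesics could collapse to a finite path, and the whole force of the argument lies in ruling this out. It is precisely the pointwise nature of the product topology, together with the fact that each coordinate $\gamma_n(k)$ is eventually defined once $n\ge k$, that prevents any coordinate of the limit from degenerating to $\undef$; this is where local finiteness, through the compactness supplied by Fact~\ref{observa}, is used in an essential way.
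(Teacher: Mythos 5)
Your proof is correct and follows essentially the same route as the paper's: extract, via the compactness of Fact~\ref{observa}, a limit of finite geodesic paths of growing length starting at a fixed root, and conclude geodesicity from Fact~\ref{closedfact}. The only difference is that you spell out two points the paper treats implicitly --- that infinite diameter yields geodesics of every length from a fixed root, and that no coordinate of the limit can degenerate to $\undef$ --- both of which you handle correctly.
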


\begin{small}
\begin{proof}
Let $\mathcal G$ be a locally finite graph of infinite diameter. Fix $o$ a vertex of $\mathcal G$. By assumption, we can find a sequence $(\gamma_n)$ of finite geodesic paths that start at $o$ and such that the length of $\gamma_n$ tends to infinity when $n$ goes to infinity. As $\mathcal G$ is locally finite, by Fact~\ref{observa}, this sequence $(\gamma_n)$ has at least one accumulation point in the space of paths: take $\gamma_\infty$ to be such an accumulation point. By Fact~\ref{closedfact}, $\gamma_\infty$ is actually geodesic. As $\gamma_\infty$ is infinite, Corollary~\ref{georay} is established.
\end{proof}
\end{small}

Under an additional assumption of transitivity, we will prove better: see Corollary~\ref{biinf}. Once again, we will use a fact that does not require local finiteness --- and actually not even transitivity.

\begin{fact}\label{prelem}
In any graph, the geodesic paths form a closed subset of the space of paths, and this subset is invariant by reparametrisation\footnote{i.e.\ by $\gamma\mapsto (\gamma(ak+b))_k$ for any $a\in\{-1,1\}$ and $b\in \Z$} and by graph automorphisms.
\end{fact}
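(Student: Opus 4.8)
The plan is to treat the three assertions separately: closedness follows from the product-topology structure already exploited in the proof of Fact~\ref{observa}, while the two invariances are direct verifications that the relevant transformations carry geodesics to geodesics.

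For closedness, I would use that $V\cup\{\undef\}$ carries the discrete topology. Viewing a path as an element of $(V\cup\{\undef\})^\Z$, its being geodesic can be written as a conjunction of conditions indexed by the pairs $(m,n)\in\Z^2$ with $m<n$, namely
\[
\big(\chemin(m)\neq\undef \text{ and } \chemin(n)\neq\undef\big)\implies d(\chemin(m),\chemin(n))=n-m .
\]
Each such condition depends only on the two coordinates $m$ and $n$; since the factors are discrete, the corresponding subset of $(V\cup\{\undef\})^\Z$ is clopen, in particular closed. The set of geodesic paths is the intersection of all these closed sets with the space of paths, hence it is closed in the space of paths. (The interval constraint on the domain is already absorbed into the definition of the space of paths, so it need not be re-imposed here, and neither local finiteness nor transitivity is used, as announced.)

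For invariance under reparametrisation, I would fix $a\in\{-1,1\}$ and $b\in\Z$ and set $\sigma(\gamma)=(\gamma(ak+b))_k$. The first step is to check that $\sigma(\gamma)$ is again a path: its domain is $a(I-b)$, which is an integer-interval because $I$ is one and $k\mapsto ak+b$ is an affine bijection of $\Z$; moreover consecutive indices $k,k+1$ are sent to indices of $I$ differing by $|a|=1$, so the edge condition is inherited from $\gamma$. The geodesic identity is then immediate: for $k,l$ in the new domain,
\[
d\big(\gamma(ak+b),\gamma(al+b)\big)=\big|(ak+b)-(al+b)\big|=|a|\,|k-l|=|k-l|.
\]

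For invariance under a graph automorphism $\varphi$, the key preliminary observation is that $\varphi$ preserves the graph distance, i.e.\ $d(\varphi(u),\varphi(v))=d(u,v)$ for all $u,v$; this holds because $\varphi$ and $\varphi^{-1}$ both send paths to paths of equal length. Granting this, $\varphi\circ\gamma$ is a path, since adjacency is preserved by the very definition of a graph isomorphism, and it satisfies $d(\varphi(\gamma(m)),\varphi(\gamma(n)))=d(\gamma(m),\gamma(n))=|m-n|$, so it is geodesic. None of these steps presents a genuine obstacle; the only point deserving care is the bookkeeping in the closedness argument, namely isolating the correct family of clopen conditions and checking that intersecting with the already-established space of paths yields a set closed in that space.
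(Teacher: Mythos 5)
Your proof is correct, and it matches the paper exactly in spirit: the paper states Fact~\ref{prelem} without proof, treating it as a routine verification, and your argument supplies precisely the expected details --- the clopen-conditions-on-finitely-many-coordinates argument already used for Fact~\ref{observa}, plus the direct checks that affine reparametrisations of $\Z$ and distance-preserving automorphisms carry geodesics to geodesics. Nothing is missing; in particular you correctly observe that the domain constraints are absorbed into the definition of the space of paths, so closedness need only be checked relative to that space.
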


By using Fact~\ref{observa}, we can prove the following lemma.

\begin{lem}
\label{closedlemma}
Let $\mathfrak{P}$ denote a set of paths of some transitive graph $\mathcal{G}$. Assume that the following conditions hold:
\begin{enumerate}
\item \label{condinv} the automorphism group of $\mathcal{G}$ leaves $\mathfrak{P}$ globally invariant;
\item \label{condparam} if $k_0\in \Z$ and $\chemin \in \mathfrak{P}$, then $k\mapsto \chemin (k+k_0)$ belongs to $\mathfrak{P}$;
\item \label{condclos} $\mathfrak{P}$ is closed;
\item \label{condlong} $\mathfrak{P}$ contains paths of arbitrarily large length;
\item \label{locfi} $\mathcal{G}$ is locally finite. \label{locfi}
\end{enumerate}
Then $\mathfrak{P}$ contains a bi-infinite path.
\end{lem}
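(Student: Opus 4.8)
The plan is to manufacture a bi-infinite path as a limit of longer and longer finite paths that have all been \emph{recentred} at a common vertex, using reparametrisation and automorphism-invariance to do the recentring and the compactness of Fact~\ref{observa} to pass to the limit. Fix once and for all a vertex $o$ of $\mathcal{G}$.

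First I would extract, using condition~\ref{condlong}, a sequence $(\gamma_n)$ of paths in $\mathfrak{P}$ with length at least $2n$. Since the domain of a finite path is an integer-interval of the form $\{a,\dots,b\}$ with $b-a\geq 2n$, reparametrising by the shift $k\mapsto\gamma_n(k-(a+n))$ — allowed by condition~\ref{condparam} — I may assume that the domain of $\gamma_n$ contains $\{-n,\dots,n\}$ and in particular that $\gamma_n(0)$ is defined. Now I invoke transitivity: choose a graph automorphism $\varphi_n$ of $\mathcal{G}$ with $\varphi_n(\gamma_n(0))=o$. By condition~\ref{condinv}, the path $\psi_n:=\varphi_n\circ\gamma_n$ still belongs to $\mathfrak{P}$, and by construction $\psi_n(0)=o$ while $\psi_n$ remains defined on all of $\{-n,\dots,n\}$.

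Each $\psi_n$ is a path starting at $o$, hence lives in the space of paths starting at $o$, which is compact by Fact~\ref{observa} thanks to local finiteness (condition~\ref{locfi}). This space is a countable product of finite discrete spaces, so it is metrisable and therefore sequentially compact; I extract a subsequence $(\psi_{n_j})_j$ converging to some element $\psi_\infty$. Because ``being a path'' is a conjunction of closed conditions (as in the proof of Fact~\ref{observa}) and $\mathfrak{P}$ is closed by condition~\ref{condclos}, the limit $\psi_\infty$ is a path and lies in $\mathfrak{P}$.

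The crux — and the step where the recentring is essential — is to check that $\psi_\infty$ is \emph{bi-infinite}, i.e.\ that its domain is all of $\Z$. Fix $k\in\Z$. For $j$ large enough we have $n_j\geq|k|$, so $\psi_{n_j}(k)$ is defined; meanwhile, convergence in the product topology is coordinatewise convergence in the finite discrete space $B(o,|k|)\cup\{\undef\}$, so the sequence $(\psi_{n_j}(k))_j$ is eventually equal to $\psi_\infty(k)$. Taking $j$ large enough for both facts at once yields $\psi_\infty(k)=\psi_{n_j}(k)\neq\undef$, so $\psi_\infty(k)$ is defined. As $k\in\Z$ was arbitrary, $\psi_\infty$ is defined on all of $\Z$, hence is a bi-infinite path in $\mathfrak{P}$. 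The main obstacle is precisely this last passage to the limit: without centring each $\psi_n$ at $o$ (and without local finiteness guaranteeing that the relevant balls are finite, so that ``undefined'' cannot sneak into the limit coordinate by coordinate) one could only control one side of the path and would recover a ray rather than a bi-infinite path.
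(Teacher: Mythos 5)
Your proof is correct and follows essentially the same route as the paper's: recentre the long paths at a fixed vertex $o$ using conditions~\ref{condparam} and~\ref{condinv} together with transitivity, then apply the compactness of Fact~\ref{observa} and the closedness of $\mathfrak{P}$ to extract a limit, which must be bi-infinite. You actually spell out two points the paper leaves implicit (sequential compactness via metrisability, and the coordinatewise argument that $\undef$ cannot appear in the limit); the only hair to split is that a path of length at least $2n$ need not be finite, but the shift recentring works verbatim for one-sided infinite domains, and a path with domain $\Z$ would already finish the proof.
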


\begin{small}
\begin{proof}
Let $(\chemin_n)$ denote a sequence of elements of $\mathfrak{P}$ such that the length of $\chemin_n$ is at least $n$. Such a sequence exists by Condition~\ref{condlong}. Let $o$ denote a vertex of $\mathcal{G}$. As $\mathcal{G}$ is transitive, by Conditions~\ref{condinv}, \ref{condparam} and \ref{condlong}, one may assume that for every $n$, the domain of $\chemin_n$ contains $-\lfloor n/2\rfloor$ and $\lfloor n/2\rfloor$ and $\chemin_n(0)=o$. By Condition~\ref{condclos}, Condition~\ref{locfi} and Fact~\ref{observa}, the elements of $\mathfrak{P}$ that start at $o$ form a compact set. The sequence $(\chemin_n)$ thus admits an accumulation point in $\mathfrak{P}$, which must be bi-infinite.

\end{proof}
\end{small}

From Fact~\ref{prelem} and Lemma~\ref{closedlemma}, it results that:

\begin{coro}\label{biinf}
Every transitive locally finite graph of infinite diameter contains a bi-infinite geodesic path.
\end{coro}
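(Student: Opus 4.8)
The plan is to apply Lemma~\ref{closedlemma} to the set $\mathfrak{P}$ of all geodesic paths of $\mathcal{G}$, so that the bi-infinite path the lemma produces is automatically geodesic. The whole task then reduces to verifying the five hypotheses of that lemma for this particular choice of $\mathfrak{P}$.

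Four of the five conditions come essentially for free from Fact~\ref{prelem}. That fact tells us at once that $\mathfrak{P}$ is closed (Condition~\ref{condclos}) and invariant under graph automorphisms (Condition~\ref{condinv}); it also gives invariance under reparametrisation, and since the shift $\chemin \mapsto (\chemin(k+k_0))_k$ is precisely the reparametrisation with $a=1$ and $b=k_0$, Condition~\ref{condparam} follows. Local finiteness (Condition~\ref{locfi}) is part of the hypothesis of the corollary, and transitivity is assumed as well, which is what the lemma requires of $\mathcal{G}$.

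The one condition that still has to be established is Condition~\ref{condlong}, namely that $\mathfrak{P}$ contains geodesic paths of arbitrarily large length; this is where the hypothesis of infinite diameter enters. Since $\diam(\mathcal{G}) = \sup\{d(u,v) : u,v\in V\} = \infty$, for every $n\in\N$ one may pick vertices $u_n$ and $v_n$ with $d(u_n,v_n)\geq n$, and then a shortest path joining them is, by the very definition of the graph distance, a geodesic of length $d(u_n,v_n)\geq n$. Hence $\mathfrak{P}$ contains paths of unbounded length.

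With all five conditions checked, Lemma~\ref{closedlemma} yields a bi-infinite element of $\mathfrak{P}$, that is, a bi-infinite geodesic path, which is exactly the assertion of Corollary~\ref{biinf}. I do not expect a genuine obstacle here: the conceptual content has already been packaged into Fact~\ref{prelem} and Lemma~\ref{closedlemma}, and the only point that needs a moment's care is the (elementary) deduction of arbitrarily long geodesics from infinite diameter.
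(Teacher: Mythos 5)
Your proof is correct and takes exactly the paper's route: the paper deduces Corollary~\ref{biinf} directly from Fact~\ref{prelem} and Lemma~\ref{closedlemma}, leaving the verification of the five hypotheses implicit. Your write-up merely makes explicit the details the paper leaves to the reader (shift-invariance as the $a=1$ case of reparametrisation, and arbitrarily long geodesics obtained as shortest paths between vertices at distance at least $n$), and these verifications are sound.
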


Of course, this corollary is false without the transitivity assumption: consider a one-sided infinite ray.

\subsubsection{Local topology is Hausdorff}

\newcommand{\Rootlf}{\mathfrak{G}^{\dagger}_{\mathsf{lf}}}

Local finiteness also plays an important role in the investigation of the so-called local topology. Let us denote by $\Rootlf$ the set consisting in the isomorphism classes of locally finite rooted graphs. Notice that $\Rootlf$ can indeed be realised as a set, as every locally finite graph is isomorphic to a graph the vertex-set of which is a subset of $\N$. By abuse of language, we may, whenever this is not harmful, identify a locally finite rooted graph with its isomorphism class.

Given two rooted graphs $(\mathcal{G},o)$ and $(\mathcal{G}',o')$, we set $d((\mathcal{G},o),(\mathcal{G}',o')):=2^{-K}$, where
$$
K:=\sup\{n\in \N : B_\mathcal{G}(o,n)\simeq B_{\mathcal{G}'}(o',n)\}\in \N\cup\{\infty\}
$$
and the isomorphism between the balls is taken relative to their structure of rooted graphs. The following proposition is well-known.

\begin{prop}\label{hauss}
The map $d$ defines a distance on $\Rootlf$.
\end{prop}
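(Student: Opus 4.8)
The plan is to check the three defining properties of a distance, with the convention $2^{-\infty}=0$. Symmetry is immediate, since graph isomorphism is a symmetric relation, and $d$ is well-defined on $\Rootlf$ (independent of the chosen representatives) because any isomorphism of rooted graphs carries balls onto balls of the same radius. As the ball of radius $0$ is a single rooted vertex, the agreement set $\{n:B_\mathcal{G}(o,n)\simeq B_{\mathcal{G}'}(o',n)\}$ always contains $0$, so $0\le d\le 1$. The real content lies in the triangle inequality — which I would strengthen to the ultrametric inequality $d(x,z)\le\max(d(x,y),d(y,z))$ — and in the implication $d=0\Rightarrow x=y$.

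The backbone of both is an observation about restriction. First I would verify that for every $v\in B_\mathcal{G}(o,n)$ the distance from $o$ to $v$ computed inside the ball equals the ambient distance $d(o,v)$: a geodesic of $\mathcal{G}$ from $o$ to $v$ has all its vertices at distance $\le d(o,v)\le n$ from $o$, hence stays inside $B_\mathcal{G}(o,n)$. Consequently any rooted isomorphism $\varphi\colon B_\mathcal{G}(o,n)\to B_{\mathcal{G}'}(o',n)$ preserves the distance to the root, so it maps $B_\mathcal{G}(o,n')$ onto $B_{\mathcal{G}'}(o',n')$ and restricts to a rooted isomorphism of these smaller balls for every $n'\le n$. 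This shows the agreement set is downward closed, i.e.\ an initial segment of $\N$; in particular $K$ is attained when finite, and $B_\mathcal{G}(o,n)\simeq B_{\mathcal{G}'}(o',n)$ holds for all $n\le K$.

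The ultrametric inequality is then formal: writing $K_{xy},K_{yz},K_{xz}$ for the three exponents and $m:=\min(K_{xy},K_{yz})$, downward closure gives $B_\mathcal{G}(o,m)\simeq B_{\mathcal{G}'}(o',m)$ and $B_{\mathcal{G}'}(o',m)\simeq B_{\mathcal{G}''}(o'',m)$, whence $B_\mathcal{G}(o,m)\simeq B_{\mathcal{G}''}(o'',m)$ and $K_{xz}\ge m$. This reads $d(x,z)\le 2^{-m}=\max(d(x,y),d(y,z))$, the case $m=\infty$ being absorbed into the identity of indiscernibles below.

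The main obstacle, and the only step where local finiteness is genuinely used, is $d=0\Rightarrow x=y$: reconstructing a global isomorphism from compatible isomorphisms of all finite balls. Assume $K=\infty$ and let $T_n$ be the set of rooted isomorphisms $B_\mathcal{G}(o,n)\to B_{\mathcal{G}'}(o',n)$. By hypothesis each $T_n$ is non-empty, and by local finiteness each ball is finite, so each $T_n$ is finite; the restriction lemma makes $\varphi\mapsto\varphi|_{B_\mathcal{G}(o,n)}$ a map $T_{n+1}\to T_n$. The inverse limit of non-empty finite sets along such maps is non-empty — this is K\"onig's lemma, equivalently a finite-intersection argument in the compact space $\prod_n T_n$ — yielding a compatible family $(\varphi_n)$ with $\varphi_{n+1}|_{B_\mathcal{G}(o,n)}=\varphi_n$. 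Since $\mathcal{G}$ is connected, $V=\bigcup_n B_\mathcal{G}(o,n)$, so the $\varphi_n$ glue to a map $\varphi\colon V\to V'$ that is a root-preserving bijection (each $\varphi_n$ is onto $B_{\mathcal{G}'}(o',n)$, and $V'=\bigcup_n B_{\mathcal{G}'}(o',n)$) and an isomorphism, as any edge lies in some ball on which $\varphi$ coincides with a $\varphi_n$. Hence $(\mathcal{G},o)\simeq(\mathcal{G}',o')$, i.e.\ $x=y$ in $\Rootlf$. The finiteness of the $T_n$, which is exactly what local finiteness provides, is the crux that makes K\"onig's lemma applicable.
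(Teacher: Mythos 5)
Your proof is correct, and it in fact covers more of the statement than the paper's own proof, which declares everything except the separation property routine: you also verify well-definedness, prove the ultrametric strengthening of the triangle inequality, and make explicit the restriction lemma (rooted ball isomorphisms preserve distance to the root, hence restrict to rooted isomorphisms of all smaller balls). For the separation property ($d=0$ implies isomorphism), both arguments run on the same fuel --- local finiteness makes the relevant spaces of partial isomorphisms compact --- but the mechanisms differ. The paper never arranges compatibility: it views each $\varphi_n\colon B_{\mathcal{G}}(o,n)\to B_{\mathcal{G}'}(o',n)$ as a point of the compact product $\prod_{v\in V}\left(B_{\mathcal{G}'}(o',d(o,v))\cup\{\undef\}\right)$ (note that placing $\varphi_n(v)$ inside $B_{\mathcal{G}'}(o',d(o,v))$ silently uses exactly your distance-preservation observation) and asserts that any accumulation point of the sequence $(\varphi_n)$ is a global isomorphism, leaving to the reader the check that such a limit is everywhere defined, bijective and edge-preserving. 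You instead use the restriction lemma to organize the finite non-empty sets $T_n$ of ball isomorphisms into an inverse system and invoke K\"onig's lemma; the limiting object is then an isomorphism essentially by construction, being the union of a compatible chain. The trade-off: your route must first build the restriction maps (and the same lemma then yields the ultrametric inequality for free), while the paper's route avoids them at the cost of an unverified, though straightforward, claim about accumulation points. Both uses of compactness (accumulation point in a product of finite sets, non-emptiness of an inverse limit of non-empty finite sets) are equivalent, so the two proofs are cousins rather than strangers; yours is the more self-contained and combinatorial of the two.
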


\begin{small}
\begin{proof}
The most interesting point to check is that if $d((\mathcal{G},o),(\mathcal{G}',o'))=0$ then $(\mathcal{G},o)$ and $(\mathcal{G}',o')$ are isomorphic. We will only check this.

Let $(\mathcal{G},o)=((V,E),o)$ and $(\mathcal{G}',o')=((V',E'),o')$ be two rooted graphs such that for every $n$, $B_\mathcal{G}(o,n)$ and $B_{\mathcal{G}'}(o',n)$ are isomorphic as rooted graphs. For every $n$, one can thus fix some isomorphism $\varphi_n$ from $B_\mathcal{G}(o,n)$ to $B_{\mathcal{G}'}(o',n)$. Each $\varphi_n$ can be seen as a map from $V$ to $V'\cup\{\undef\}$, and more precisely as an element of the space $\prod_{v\in V} \left(B_{\mathcal{G}'}(o',d(o,v))\cup\{\undef\}\right)$ --- which is compact by local finiteness of $\mathcal{G}'$. Any accumulation point of the sequence $(\varphi_n)$ is an isomorphism from $\mathcal G$ to $\mathcal{G}'$.
\end{proof}
\end{small}

This distance defines the so-called \defini{local topology} --- see  \cite{babai91, bslimits, diestelleader}. It is very useful in statistical mechanics: see e.g.\ \cite{asuipt, bnp, gllocality1, grimmettli2, cantor, abelianperco}. It is also important in the topic of soficity, which is an important notion at the interplay between group theory and ergodic theory \cite{soficweiss}. The reader is referred to \cite{chaguir, cgp} for some interaction with group theory (via the notion of marked groups \cite{chabautylimit, champetier, grigor85}), and to Section 10 in \cite{aldouslyons} for some graph-theoretic/probabilistic counterpart of soficity.

Say that a graph $\mathcal{G}$ is \defini{weakly transitive}\label{defweaktrans} if for any vertices $o$ and $o'$ and every positive integer $n$, the graphs $B_{\mathcal G}(o,n)$ and $B_{\mathcal G}(o',n)$ are isomorphic as rooted graphs. This definition applies to graphs that may not be locally finite.

\begin{coro}\label{deucoro}
A locally finite graph is transitive if and only if it is weakly transitive.
\end{coro}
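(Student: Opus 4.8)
The plan is to prove the two implications separately, observing that only one of them actually requires local finiteness.

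First I would dispatch the easy direction, which in fact holds for \emph{any} graph: transitivity implies weak transitivity. Indeed, if $\mathcal{G}$ is transitive and $o,o'$ are two of its vertices, I pick a graph automorphism $\varphi$ with $\varphi(o)=o'$. Since automorphisms preserve the graph distance, $\varphi$ maps $B_{\mathcal G}(o,n)$ bijectively onto $B_{\mathcal G}(o',n)$ and respects the induced edge-sets and the roots, so it restricts to an isomorphism of rooted graphs $B_{\mathcal G}(o,n)\simeq B_{\mathcal G}(o',n)$ for every $n$. This is exactly the definition of weak transitivity, and local finiteness plays no role.

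The substantive direction is the converse, and here the point is that weak transitivity is precisely the hypothesis of Proposition~\ref{hauss} read for a single graph carrying two different roots. Concretely, I would assume $\mathcal{G}$ locally finite and weakly transitive, and fix vertices $o,o'$. Weak transitivity yields $B_{\mathcal G}(o,n)\simeq B_{\mathcal G}(o',n)$ as rooted graphs for every $n$, which by definition of $d$ means $d((\mathcal{G},o),(\mathcal{G},o'))=0$. Since Proposition~\ref{hauss} asserts that $d$ is a genuine distance on $\Rootlf$, the vanishing of $d$ forces $(\mathcal{G},o)$ and $(\mathcal{G},o')$ to be isomorphic as rooted graphs. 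But an isomorphism of rooted graphs from $(\mathcal{G},o)$ to $(\mathcal{G},o')$ is nothing other than a graph automorphism of $\mathcal{G}$ sending $o$ to $o'$. As $o,o'$ were arbitrary, $\mathcal{G}$ is transitive.

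The only genuinely hard step is concealed inside Proposition~\ref{hauss}, and that is exactly where local finiteness does its work: it provides the compactness needed to extract from the sequence of root-preserving ball-isomorphisms an accumulation point that is a bona fide isomorphism of the entire graph. Since we are entitled to invoke that proposition, no further obstacle remains, and the corollary is essentially a reinterpretation of Proposition~\ref{hauss} in the special case $\mathcal{G}=\mathcal{G}'$. It is worth flagging in the write-up that the forward implication needs no hypothesis at all, so that local finiteness is genuinely used only to pass from weak transitivity back to transitivity.
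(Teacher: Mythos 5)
Your proof is correct and follows exactly the paper's own argument: the forward direction is immediate from the fact that automorphisms preserve distances, and the converse reduces, via $d((\mathcal{G},o),(\mathcal{G},o'))=0$, to the separation property of $d$ established in Proposition~\ref{hauss}. The only difference is that you spell out the easy direction in more detail than the paper does, which is harmless.
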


\begin{small}
\begin{proof}
Every transitive graph is weakly transitive. Let thus $\mathcal{G}$ denote a locally finite weakly transitive graph. Let $o$ and $o'$ denote two vertices of $\mathcal G$. By weak transitivity of $\mathcal G$, we have $d((\mathcal{G},o),(\mathcal{G},o'))=0$. By Proposition~\ref{hauss}, $(\mathcal{G},o)$ and $(\mathcal{G},o')$ are isomorphic, which means that there is an automorphism of $\mathcal G$ that maps $o$ to $o'$.
\end{proof}
\end{small}

Given a weakly transitive graph, for every $k$, one can speak --- up to isomorphism as a rooted graph --- of its ball of radius $k$.
Say that two weakly transitive graphs are \defini{weakly isomorphic} if for every positive integer $k$, their balls of radius $k$ are isomorphic the one to the other. This definition applies to graphs that may not be locally finite.

The following statement results from Proposition~\ref{hauss}.

\begin{coro}\label{uncoro}
For locally finite (weakly) transitive graphs, weak isomorphism is equivalent to isomorphism.
\end{coro}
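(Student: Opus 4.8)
The plan is to prove the two implications separately. The content is lopsided: the direction isomorphism $\implies$ weak isomorphism is genuinely trivial, and the converse reduces immediately to the separation property already established in Proposition~\ref{hauss}.

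First I would dispatch isomorphism $\implies$ weak isomorphism. Given a graph isomorphism $\varphi : \mathcal{G} \to \mathcal{G}'$ and a vertex $o$ of $\mathcal{G}$, the map $\varphi$ preserves the graph distance, so it carries $B_{\mathcal{G}}(o,k)$ bijectively onto $B_{\mathcal{G}'}(\varphi(o),k)$ and respects adjacency in both directions; rooting at $o$ and $\varphi(o)$ respectively, this restriction is a rooted-graph isomorphism. Hence the ball of radius $k$ of $\mathcal{G}$ is isomorphic to that of $\mathcal{G}'$ for every $k$, which is exactly weak isomorphism. Note this direction does not even use local finiteness or weak transitivity.

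For the converse, suppose $\mathcal{G}$ and $\mathcal{G}'$ are locally finite weakly transitive graphs that are weakly isomorphic. I would fix once and for all a root $o$ in $\mathcal{G}$ and a root $o'$ in $\mathcal{G}'$. Because both graphs are weakly transitive, the isomorphism type of the ball of radius $k$ does not depend on the chosen basepoint, so the hypothesis of weak isomorphism translates into $B_{\mathcal{G}}(o,k) \simeq B_{\mathcal{G}'}(o',k)$ as rooted graphs, for every positive integer $k$ (the radius-zero case being trivial). In the notation of Proposition~\ref{hauss}, this says precisely that $d((\mathcal{G},o),(\mathcal{G}',o'))=0$.

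It then remains to invoke Proposition~\ref{hauss}: the separation property it establishes — the only point actually proved there — guarantees that $d((\mathcal{G},o),(\mathcal{G}',o'))=0$ forces $(\mathcal{G},o)$ and $(\mathcal{G}',o')$ to be isomorphic as rooted graphs, and \emph{a fortiori} $\mathcal{G}$ and $\mathcal{G}'$ to be isomorphic as graphs. There is no real obstacle beyond this appeal, since the compactness argument powering $d=0 \implies$ isomorphic (the diagonal/Tychonoff extraction over the finite balls, which is where local finiteness is used) has already been carried out inside Proposition~\ref{hauss}. The only subtlety worth flagging is that weak isomorphism is defined through balls taken up to isomorphism, so one must first use weak transitivity of each graph to make the comparison basepoint-free before reading it off at the specific pair $(o,o')$.
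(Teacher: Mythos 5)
Your proof is correct and takes essentially the same route as the paper: the paper's entire proof is the one-line remark that the corollary ``results from Proposition~\ref{hauss}'', and what you write out --- the trivial direction, plus using weak transitivity to anchor weak isomorphism at a chosen pair of roots so that $d((\mathcal{G},o),(\mathcal{G}',o'))=0$, then invoking the separation property of Proposition~\ref{hauss} --- is exactly the intended reduction.
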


\subsection{Beyond the locally finite case}

\label{endintro}

Let us consider the corresponding situations in the non locally finite world. In Section~\ref{endintro}, I will use the word ``fact'' for results whose essence is classical. Except for facts, proofs will be deferred to subsequent sections.

\saut

Before getting to precise statements, I want to explain why such a study is legitimate. Apart from the exploration of the limits of regularity properties for their own sake and my interest for the constructions and questions it leads to, I would like to mention the following motivation: the study of \emph{geometric} properties of some \emph{quasi-transitive} non locally finite graphs plays a true role in the main body of mathematics. For instance, in the study of surfaces, the \emph{curve graph} of an orientable surface $S$ \begin{small}with genus $g$ and $m$ punctures satisfying $3g+m\geq 5$\end{small}, and more specifically its hyperbolicity, plays a major role in the study of the mapping class group of $S$ and its Teichmüller space --- see \cite{mamin1, mamin2}. \begin{small}The hyperbolicity of this graph led to the resolution of Thurston's ending lamination conjecture -- see \cite{lamin}.\end{small}
Likewise, in group theory, the hyperbolicity of the graph of free factors and that of free splittings of the free group with $n\geq 3$ generators $\mathbf{F}_n$ is important in the study of the exterior automorphisms of $\mathbf{F}_n$.

\subsubsection{The study of geodesic paths}

\label{subsubgeod}

Fact~\ref{observa} is \emph{always} false in the non locally finite case.

\begin{fact}
In a non locally finite rooted graph, the set of the paths that start at the root is never compact.
\end{fact}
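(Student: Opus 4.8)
The plan is to prove the contrapositive of compactness directly by exhibiting an open cover with no finite subcover, or equivalently by producing a sequence of paths with no convergent subsequence. Since the rooted graph $(\mathcal{G},o)$ is not locally finite, by definition there is some vertex $w$ with infinitely many neighbours; let $(w_k)_{k\in\N}$ be a sequence of pairwise distinct neighbours of $w$. The idea is that the ``one-step excursions'' to these infinitely many neighbours give paths that are spread out in the product topology and cannot accumulate.

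First I would fix a finite path $\sigma$ from the root $o$ to the vertex $w$, say of length $\ell$ with $\sigma(0)=o$ and $\sigma(\ell)=w$; such a path exists since $\mathcal{G}$ is connected. Then for each $k$ I would define a path $\chemin_k : \{0,1,\dots,\ell+1\}\to V$ that follows $\sigma$ up to time $\ell$ and then sets $\chemin_k(\ell+1)=w_k$. Each $\chemin_k$ is a genuine path starting at $o$, because $\{w,w_k\}\in E$ by construction. The key observation is that these paths all agree on the coordinates $0,\dots,\ell$ but take pairwise distinct values at coordinate $\ell+1$.

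Next I would argue that $(\chemin_k)$ has no accumulation point in the space of paths starting at $o$, which contradicts compactness. Suppose for contradiction that $\chemin_\infty$ were an accumulation point. Evaluating at coordinate $\ell+1$: the cylinder set $\{x : x(\ell+1)=\chemin_\infty(\ell+1)\}$ is an open neighbourhood of $\chemin_\infty$ in the product topology, so it must contain $\chemin_k$ for infinitely many $k$. But $\chemin_k(\ell+1)=w_k$ takes pairwise distinct values, so at most one index $k$ satisfies $\chemin_k(\ell+1)=\chemin_\infty(\ell+1)$, a contradiction. (If $\chemin_\infty(\ell+1)=\undef$, the same argument applies since each $\chemin_k(\ell+1)=w_k\neq\undef$.) Hence the set of paths starting at the root admits a sequence with no accumulation point and is therefore not compact.

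I do not expect any serious obstacle here; the statement is essentially the converse of Fact~\ref{observa} and the only thing to be careful about is bookkeeping the coordinate indexing so that the $\chemin_k$ really are paths in the sense defined (non-empty integer-interval domain, adjacency of consecutive values) and that the product topology really does separate them at the single coordinate $\ell+1$. The mildly delicate point is to phrase ``accumulation point'' correctly: since the domain is an initial segment rather than all of $\Z$, I would present $\chemin_k$ as an element of $(V\cup\{\undef\})^\Z$ with value $\undef$ outside $\{0,\dots,\ell+1\}$, matching the convention of the excerpt, so that convergence is genuinely coordinatewise convergence in the product space.
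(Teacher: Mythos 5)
Your proof is correct and follows essentially the same route as the paper's: pick a vertex $w$ of infinite degree, connect the root to it by a finite path, append each of infinitely many distinct neighbours of $w$ to get a sequence of paths, and observe that this sequence has no accumulation point. The only difference is that you spell out the cylinder-set argument (and the $\undef$ case) that the paper leaves implicit, which is a harmless elaboration.
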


\begin{small}
\begin{proof}
Let $\mathcal{G}=(V,E)$ denote a graph that is not locally finite, and let $o$ be a vertex of $\mathcal G$. As $\mathcal{G}$ is not locally finite, there is a vertex of $\mathcal G$ with infinitely many neighbours: let $o'$ denote such a vertex. As $\mathcal G$ is connected, there is path $\gamma : \{0,\dots, d(o,o')\} \to V$ such that $\gamma(0)=o$ and $\gamma(d(o,o'))=o'$; fix such a path $\gamma$. As $o'$ has infinitely many neighbours, one can find an injective sequence of vertices $(v_n)_{n\in \N}$ such that for every $n$, $v_n$ is a neighbour of $o'$. For $n\in \N$, define $\chemin_n:\{0,\dots,d(o,o')+1\}\to V$ as agreeing with $\gamma$ where $\gamma$ is defined and taking value $v_n$ at $d(o,o')+1$. For every $n$, $\chemin_n$ is a path. The sequence $(\chemin_n)$ has no accumulation point.
\end{proof}
\end{small}

Corollary~\ref{georay} is false in the non locally finite case.

\begin{fact}\label{cexdiam}
There is a graph of infinite diameter that does not admit any geodesic ray.
\end{fact}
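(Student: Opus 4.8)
The plan is to exhibit an explicit ``spider'' graph with legs of unbounded finite length. Concretely, I would take the vertex set
$V:=\{o\}\cup\{a_{k,i}:k\geq 1,\ 1\leq i\leq k\}$,
and declare the edges to be $\{o,a_{k,1}\}$ for every $k\geq 1$, together with $\{a_{k,i},a_{k,i+1}\}$ for every $k\geq 1$ and every $1\leq i\leq k-1$. Thus for each $k$ the vertices $o,a_{k,1},\dots,a_{k,k}$ form a path of length $k$ (the $k$-th \emph{arm}), all arms sharing only the central vertex $o$. This $\mathcal{G}$ is connected and is a tree; it is not locally finite, since $o$ has one neighbour $a_{k,1}$ per arm and hence infinite degree, whereas every other vertex has degree $1$ or $2$.

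Since $\mathcal{G}$ is a tree, I would first record that the graph distance between two vertices equals the length of the unique simple path joining them; in particular $d(o,a_{k,i})=i$. Infinite diameter is then immediate: $d(a_{k,k},a_{k',k'})=k+k'$, which is unbounded as $k,k'$ range over the positive integers, so $\diam(\mathcal{G})=\infty$. The real content is the absence of a geodesic ray, and the key reduction is the following observation. If $\gamma:\N\to V$ were a geodesic ray, then $d(\gamma(m),\gamma(n))=|m-n|>0$ for $m\neq n$ would force the vertices $\gamma(0),\gamma(1),\dots$ to be pairwise distinct and consecutively adjacent; in other words the image of $\gamma$ would be an \emph{infinite simple path} in $\mathcal{G}$. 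It therefore suffices to show that $\mathcal{G}$ contains no infinite simple path.

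The main obstacle is precisely this last point, and I would settle it using the cut-vertex structure of $o$. Removing $o$ disconnects $\mathcal{G}$ into the arms minus their endpoint $o$, each of which is a finite path. A simple path can pass through the cut vertex $o$ at most once, so it splits as a (possibly empty) initial segment inside one arm, followed by $o$, followed by a final segment inside another arm; hence it is contained in the union of at most two arms and has length at most $k+k'<\infty$ for some $k,k'$. Consequently every simple path in $\mathcal{G}$ is finite, no infinite simple path exists, and therefore $\mathcal{G}$ admits no geodesic ray, which is what the statement requires. I expect the only delicate clause to spell out carefully is the passage ``geodesic ray $\Rightarrow$ infinite simple path,'' together with the claim that $o$ is visited at most once by a simple path; the finiteness of each arm then does the rest.
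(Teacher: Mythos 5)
Your proposal is correct and is essentially the paper's own proof: the graph you build (a central vertex with pairwise-disjoint arms of every finite length $k\geq 1$) is exactly the graph the paper constructs, with $a_{k,i}$ corresponding to the paper's vertex $(i,k)$ and $o$ to the paper's vertex $0$. The paper merely exhibits this graph and asserts the two properties, whereas you also spell out the verification (infinite diameter via $d(a_{k,k},a_{k',k'})=k+k'$, and no geodesic ray since any such ray would be an infinite simple path, which the cut-vertex $o$ and the finiteness of each arm forbid); this is a sound filling-in of the details the paper leaves to the reader.
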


\begin{small}
\begin{proof}
Let $\mathcal{G}$ be the graph defined as follows, and depicted on Figure~\ref{hand}. The vertex-set is $V:=\{(k,n)\in \N^2:1\leq k \leq n\}\cup\{0\}$. The edge-set is defined to be
$$
\{\{(k,n),(l,n)\}:1\leq k\leq n,~ 1\leq l \leq n,~ |k-l|=1\}\cup\{\{0,(1,n)\}:n\geq 1\}.
$$
This ``graph'' is a graph of infinite diameter with no geodesic ray.

\begin{figure}[h!]
\centering

\definecolor{qqqqff}{rgb}{0.,0.,1.}
\begin{tikzpicture}[line cap=round,line join=round,>=triangle 45,x=0.6446708784256676cm,y=0.6446708784256676cm]
\clip(4.204541897368076,4.100379940939294) rectangle (23.439162872153982,13.69664598525259);
\draw (13.68,12.94)-- (15.88,12.94);
\draw (13.68,12.94)-- (15.88,10.74);
\draw (13.68,12.94)-- (15.88,8.54);
\draw (13.68,12.94)-- (15.88,6.34);
\draw (15.88,10.74)-- (18.08,10.74);
\draw (15.88,8.54)-- (18.08,8.54);
\draw (18.08,8.54)-- (20.28,8.54);
\draw (15.88,6.34)-- (18.08,6.34);
\draw (18.08,6.34)-- (20.28,6.34);
\draw (20.28,6.34)-- (22.48,6.34);
\draw (6.56,6.32)-- (4.94,7.2);
\draw (6.56,6.32)-- (5.88466109867141,8.035435037636894);
\draw (6.56,6.32)-- (6.832856874104015,8.14327977179976);
\draw (6.56,6.32)-- (7.44,7.94);
\draw (5.88466109867141,8.035435037636894)-- (5.2093221973428205,9.750870075273788);
\draw (6.832856874104015,8.14327977179976)-- (7.105713748208031,9.96655954359952);
\draw (7.44,7.94)-- (8.32,9.56);
\draw (8.32,9.56)-- (9.2,11.18);
\draw (9.2,11.18)-- (10.08,12.8);
\draw (7.105713748208031,9.96655954359952)-- (7.378570622312047,11.789839315399279);
\draw (15.63,6.289002021221274) node[anchor=north west] {$\rotatebox{90.0}{ \text{ ... }  }$};
\draw (17.672985468334137,4.7) node[anchor=north west] {$\rotatebox{0.0}{ \text{ ... }  }$};
\draw (14.35,8.407475957904472) node[anchor=north west] {$\rotatebox{90.0}{ \text{ ... }  }$};
\draw (7.29,8.15) node[anchor=north west] {$\rotatebox{-35.0}{ \text{ ... }  }$};
\begin{scriptsize}
\draw [fill=qqqqff] (13.68,12.94) circle (2.0pt);
\draw [fill=qqqqff] (15.88,12.94) circle (2.0pt);
\draw [fill=qqqqff] (15.88,10.74) circle (2.0pt);
\draw [fill=qqqqff] (15.88,8.54) circle (2.0pt);
\draw [fill=qqqqff] (18.08,10.74) circle (2.0pt);
\draw [fill=qqqqff] (18.08,8.54) circle (2.0pt);
\draw [fill=qqqqff] (20.28,8.54) circle (2.0pt);
\draw [fill=qqqqff] (15.88,6.34) circle (2.0pt);
\draw [fill=qqqqff] (18.08,6.34) circle (2.0pt);
\draw [fill=qqqqff] (20.28,6.34) circle (2.0pt);
\draw [fill=qqqqff] (22.48,6.34) circle (2.0pt);
\draw [fill=qqqqff] (6.56,6.32) circle (2.0pt);
\draw [fill=qqqqff] (4.94,7.2) circle (2.0pt);
\draw [fill=qqqqff] (5.88466109867141,8.035435037636894) circle (2.0pt);
\draw [fill=qqqqff] (6.832856874104015,8.14327977179976) circle (2.0pt);
\draw [fill=qqqqff] (7.44,7.94) circle (2.0pt);
\draw [fill=qqqqff] (5.2093221973428205,9.750870075273788) circle (2.0pt);
\draw [fill=qqqqff] (7.105713748208031,9.96655954359952) circle (2.0pt);
\draw [fill=qqqqff] (7.378570622312047,11.789839315399279) circle (2.0pt);
\draw [fill=qqqqff] (8.32,9.56) circle (2.0pt);
\draw [fill=qqqqff] (9.2,11.18) circle (2.0pt);
\draw [fill=qqqqff] (10.08,12.8) circle (2.0pt);
\end{scriptsize}
\end{tikzpicture}

\caption{Two ways of picturing the graph used in the proof of Fact~\ref{cexdiam}.}
\label{hand}
\end{figure}
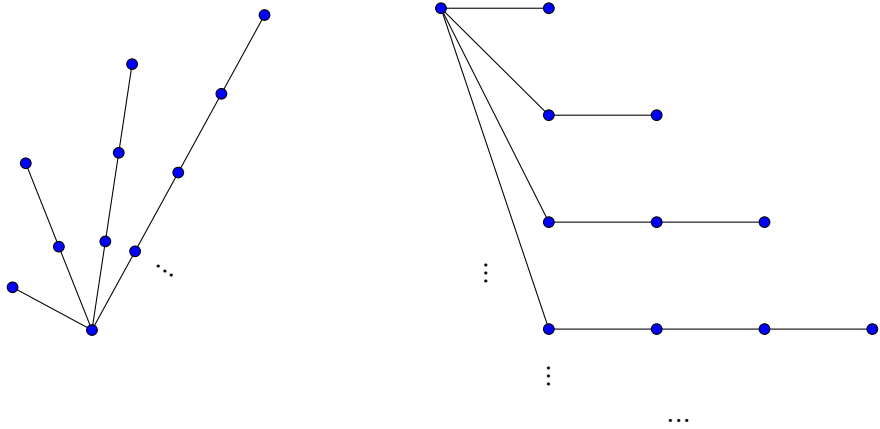
\end{proof}
\end{small}

Corollary~\ref{georay} is also false in the transitive non locally finite case: actually one can even find counterexamples that are Cayley graphs of abelian groups.

\begin{prop}
The groups $\Z$ and $\bigoplus_{n\geq 2} \Z/n\Z$ admit Cayley graphs of infinite diameter that do not contain any geodesic ray.
\end{prop}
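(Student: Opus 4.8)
The plan is to realise each group with a cleverly chosen infinite generating set so that the resulting Cayley graph has infinite diameter yet every vertex admits only finitely many extensions along any geodesic, which forces the nonexistence of a geodesic ray. The guiding intuition comes from the ``hand'' graph of Fact~\ref{cexdiam}: we want arbitrarily long finite geodesics emanating from the origin, but the lengths of geodesics through any fixed vertex to be bounded once the first step is taken, so that no ray can be assembled. The transitivity is then automatic, since a Cayley graph is always transitive.

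First I would treat $\bigoplus_{n\geq 2}\Z/n\Z$, which is the more flexible of the two because its torsion lets us build ``dead-end'' structure cheaply. Write $G=\bigoplus_{n\geq2}\Z/n\Z$ and let $e_n$ denote the canonical generator of the $n$-th factor (of order $n$). The idea is to take a generating set $S$ consisting of the $e_n$ together with carefully chosen ``long'' elements $g_n$ so that $g_n$ lies at distance exactly $n$ from the identity but all of its neighbours other than along one prescribed path lie much closer to the identity. Concretely, one arranges that from the identity there are geodesic segments of every length $n$ (giving infinite diameter), realised inside disjoint ``blocks'' of coordinates, but that the block realising length $n$ is a genuine path $0,v_1,\dots,v_n$ whose far endpoint $v_n$ has the property that every generator, when applied to it, either returns toward $0$ or produces an element already within distance $n-1$ of the identity. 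Then a geodesic ray would have to eventually stay inside a single block (since leaving a block costs distance that cannot be recovered geodesically), but each block has bounded diameter, yielding a contradiction. The combinatorial heart is choosing $S$ so that these distance constraints hold simultaneously; I would verify them by giving an explicit word-length formula, writing $d(0,x)$ as a minimisation over representations of $x$ as a sum of generators and checking that the ``block'' representation is optimal.

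For $\Z$ the torsion trick is unavailable, so the generating set must be an infinite subset $S\subset\Z$ (symmetric, generating, i.e.\ containing $\pm1$ or at least elements with gcd $1$) engineered so that $\Cay(\Z,S)$ has infinite diameter but no geodesic ray. Here the plan is to pick a sparse but suitably structured set of jump sizes, for instance something like $S=\{\pm a_k\}$ where the $a_k$ grow fast enough that, to reach a far point, one is forced to use exactly one large jump followed by short corrections, but no infinite sequence of such moves can be geodesic. The distance from $0$ to an integer $m$ is the minimal number of elements of $S\cup(-S)$ summing to $m$; I would choose the $a_k$ (e.g.\ a lacunary or specially spaced sequence) so that this coin-problem-style distance function is unbounded (giving infinite diameter) while any putative geodesic ray, tracked by its partial sums, is forced to revisit the ``efficient-representation'' regime infinitely often and thereby fails to remain geodesic. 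The technical step is again an explicit lower bound on word length showing optimality of a canonical representation.

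The main obstacle I anticipate is the $\Z$ construction: unlike the torsion case, $\Z$ offers no short relations to create dead ends, so ruling out geodesic rays requires a genuinely quantitative control of the word metric induced by the chosen $S$, proving both that distances are unbounded and that long geodesics cannot be concatenated into an infinite one. I would therefore spend most of the effort pinning down the precise growth rate of the generators and proving a clean formula (or sharp two-sided estimate) for $d(0,m)$, from which the infinite-diameter and no-ray statements follow as corollaries; the $\bigoplus\Z/n\Z$ case should then fall out by a structurally similar but easier argument, possibly even by transporting the $\Z$ construction through a suitable surjection. As a sanity check throughout, I would keep the ``hand'' graph as the model to imitate: infinitely many finite geodesics of unbounded length, each trapped in a bounded-diameter region, with no way to escape to infinity geodesically.
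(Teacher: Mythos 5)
Your proposal identifies the right target properties (infinite diameter, no geodesic ray, transitivity for free since these are Cayley graphs), but it is a plan rather than a proof: for neither group do you exhibit a generating set, and the two things that constitute the entire content of the proposition --- the unboundedness of the word metric and the nonexistence of geodesic rays --- are precisely the steps you defer (``I would verify them by giving an explicit word-length formula'', ``the technical step is again an explicit lower bound''). Moreover, your heuristics point away from constructions that work. For $\bigoplus_{n\geq 2}\Z/n\Z$, the paper does not use canonical generators plus long ``dead-end'' generators arranged in blocks; it takes the \emph{fat diagonal} $S=\{g : \forall n,\ g_n\in\{-1,0,1\}\}$, so that every step may move \emph{all} coordinates simultaneously and the word metric becomes the maximum over $n$ of the cyclic distances in $\Z/n\Z$. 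Infinite diameter then follows from the $1$-Lipschitz projections onto $\Cay(\Z/n\Z,\{1\})$, and no ray can be geodesic because any path $\gamma$ with $\gamma(0)=0$ has $\gamma(1)$ supported in coordinates $<N$ for some $N$, which forces $d(\gamma(0),\gamma(N))\leq N-1$. Your desired conclusion (geodesics through a fixed vertex have bounded length after the first step) is indeed what happens there, but your proposed route to it --- adding long generators to the set $\{e_n\}$ --- is unconstructed, and note that $\Cay(G,\{e_n\})$ already contains geodesic rays such as $n\mapsto e_2+\dots+e_{n+1}$, with no mechanism given for how the added generators would destroy all of them while keeping the diameter infinite.

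The gap is most serious for $\Z$, where your guiding idea (a sparse, lacunary set of jump sizes) is very likely a dead end. Take the most natural such set, $S=\{\pm 2^k : k\geq 0\}$: the distance $d(0,m)$ is the minimal weight of a signed binary representation of $m$, which by Reitwiesner's non-adjacent-form theorem equals the weight of the NAF of $m$. The integers $m_n=1+4+\dots+4^n$ have binary expansion $1010\dots01$, already in non-adjacent form, so $d(0,m_n)=n+1$ while $m_{n+1}-m_n=4^{n+1}\in S$; hence $n\mapsto m_n$ is a geodesic ray, even though the diameter is infinite. So fast growth of the generators by itself does not prevent geodesic rays, and ``one large jump followed by short corrections'' is exactly the regime in which such rays survive. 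The paper's example is the opposite of sparse: $S$ is the set of all signed sums of distinct factorials $\sum_n \varepsilon_n n!$ with $\varepsilon\in\{-1,0,1\}^{\N^\star}$ finitely supported, and ruling out a geodesic ray is genuinely delicate --- one first normalizes the increments of a hypothetical ray by an infinite-descent argument so that the factorial digits of the first increment recur in all increments, and then builds a $1$-Lipschitz competitor path that reconnects with the ray, contradicting geodesicity. Nothing in your proposal substitutes for this argument, so the proposition is not established for either group.
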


We introduce on page \pageref{gendiamdef} the generalised diameter of a graph. The generalised diameter of a graph is either an ordinal, or the symbol $\infty$. It is a finite ordinal if and only if the usual diameter is finite, and in that case the two notions agree. Moreover, the generalised diameter is $\infty$ if and only if the considered graph contains a geodesic ray. 
When this generalised diameter is an infinite ordinal, it captures finely the extension properties of geodesic paths.

\begin{thm}
There are Cayley graphs\begin{small} of abelian groups\end{small} of every generalised diameter.
\end{thm}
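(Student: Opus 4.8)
The plan is to proceed by transfinite induction on the target value. Recall that, when the graph contains no geodesic ray, the generalised diameter is the ordinal rank of the well-founded tree $\mathcal{T}_o$ whose nodes are the finite geodesic paths issued from a base vertex $o$, ordered by one-step extension — the rank of a node $\gamma$ being $\sup\{\mathrm{rk}(\gamma')+1:\gamma'\text{ extends }\gamma\text{ by one vertex}\}$ — and that it is $\infty$ precisely when a geodesic ray exists; for a transitive graph this rank is independent of $o$. Two cases are immediate: every finite value $m$ is realised by a cycle $\Cay(\Z/(2m+1)\Z,\{1\})$, and $\infty$ by $\Cay(\Z,\{1\})$. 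It thus remains to realise every infinite ordinal $\alpha$ by a Cayley graph of an abelian group. As a guide, note that the tree of strictly decreasing finite sequences below $\alpha$ already has rank exactly $\alpha$, so the goal is to realise \emph{some} well-founded tree of rank $\alpha$ as a based geodesic tree of an abelian Cayley graph.

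The engine of the induction would be a single gluing lemma: given gadgets $\mathcal{G}_i=\Cay(A_i,S_i)$, with the $A_i$ abelian, of generalised diameters $\alpha_i$ satisfying $\sup_i(\alpha_i+1)=\alpha$ — a single gadget with $\alpha_0+1=\alpha$ in the successor case, a cofinal family in the limit case — one produces an abelian, hence transitive, Cayley graph $\mathcal{G}_\alpha$ whose based geodesic tree $\mathcal{T}_o$ carries, immediately below its root, one copy of each tree of $\mathcal{G}_i$ and nothing else. If this is done so that no geodesic interleaves two distinct gadgets, the children of the root have ranks exactly $\alpha_i$, whence $\mathrm{rk}(\mathcal{T}_o)=\sup_i(\alpha_i+1)=\alpha$. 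The natural first candidate is the direct sum $\bigoplus_i A_i$ equipped with $\bigcup_i S_i$, that is the box product of the $\mathcal{G}_i$; here the torsion one builds into the $A_i$ (as in $\bigoplus_{n}\Z/n\Z$) is essential, since inside a cyclic factor every geodesic has length at most half the order, which forbids any single factor from carrying a ray.

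The hard part — and the reason the naive product does not already succeed — is suppressing interleaving. The box product of the $\mathcal{G}_i$ has generalised diameter the Hessenberg natural sum $\bigoplus_i\alpha_i$, which overshoots $\alpha$, and an infinite box product even contains geodesic rays (step once in each successive coordinate). So the crux is to superimpose on $\bigoplus_i A_i$ a system of \emph{selector} generators and torsion relations forcing every geodesic issued from the identity to commit to a single index $i$: one wants each genuinely mixed element to admit a strictly shorter representation, so that geodesics live in one $A_i$ at a time while the within-$A_i$ geodesics are left intact. Checking simultaneously that (a) each gadget's tree embeds without being shortened, (b) all cross-gadget moves become non-geodesic, and (c) no new geodesic ray is created, is where essentially all the difficulty lies; this is the step I expect to be the main obstacle, and the one dictating the concrete shape of the $S_i$ and of the selector — presumably built from long cyclic factors playing, transitively, the role of the finite branches of the ``hand'' graph of Fact~\ref{cexdiam}.

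Finally, the base of the induction is the first infinite stage: a transitive, abelian, infinite-diameter, ray-free Cayley graph, as provided by the Proposition preceding this theorem, which one arranges to have generalised diameter exactly $\omega$. Successor and limit steps are then both instances of the gluing lemma, so the induction closes once that lemma is established. For ordinals of uncountable cofinality or cardinality one simply takes the index set and the groups $A_i$ large enough — abelian groups such as $\bigoplus_{j\in J}\Z/n_j\Z$ can have arbitrary cardinality — so that the based geodesic tree is able to carry the required rank; the computation $\mathrm{rk}(\mathcal{T}_o)=\alpha$ then goes through verbatim, which would complete the transfinite induction and hence the theorem.
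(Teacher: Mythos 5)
Your write-up correctly sets the stage (the rank-of-the-geodesic-tree formulation, transitivity making the rank base-point independent, finite values via odd cycles, $\infty$ via $\Z$), and your diagnosis of why the naive box product $\bigoplus_i A_i$ with $\bigcup_i S_i$ fails --- interleaving creates rays and the rank overshoots --- is exactly right. But the proof has a genuine gap, and it is the one you yourself flag: the entire induction rests on a ``gluing lemma'' that you never establish. The route you sketch for it (superimposing ``selector'' generators and torsion relations so that every mixed element gets a strictly shorter representative, forcing geodesics to commit to a single gadget) is not a routine verification deferred for brevity; it is the whole problem. In particular, it is unclear how to add generators that shorten all cross-gadget moves without also shortening some within-gadget geodesics (your requirement (a)), while keeping the group abelian and creating no new rays (your (c)); you give no candidate for the selector, so as it stands this is a plan for a proof, not a proof. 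A secondary unverified claim is the base case: you assert the ray-free graph of the earlier Proposition ``can be arranged'' to have generalised diameter exactly $\omega$, but you do not compute its rank.

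What you missed is that no selector mechanism is needed: the paper kills interleaving by changing the generating set of the direct sum, not by adding relations. It defines $(G_\eta,S_\eta)$ for all ordinals simultaneously by transfinite recursion: at a successor, $G_{\xi+1}:=G_\xi\oplus\Z/2\Z$ with $S_{\xi+1}:=(S_\xi\times\{0\})\cup\{(0,1)\}$, which adds exactly $1$ to the generalised diameter; at a limit (and at $0$), $G_\eta:=\bigoplus_{\xi<\eta}G_\xi$ with the \emph{strong-product} generating set $S_\eta:=\{g:\forall\xi<\eta,\ g_\xi\in S_\xi\}$. Since $0\in S_\xi$ for every $\xi$ (by the recursion), a single step may move \emph{all} coordinates at once, so the Cayley distance is the \emph{maximum} of the coordinate distances rather than their sum. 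This is what makes mixed moves harmless: geodesics are not forced into one factor, but a step away from the origin must increase the distance in every coordinate realising the maximum, and the label of $g$ works out to $\sup\{\ell^{\xi}_{g_\xi}:d(0,g_\xi)=d(0,g)\}$, whence $\ell^\eta_0=\sup_{\xi<\eta}\ell^\xi_0=\sup_{\xi<\eta}\xi=\eta$. The verification is then an exercise with the characterisation $L_v=\sup^+\{L_w:v\fleche w\}$ of the label map, carried out separately in the successor case (using $(g,0)\fleche(h,1)\iff g=h$ and $(g,1)\not\fleche(h,0)$) and the limit case. So your instinct that torsion matters is right, and your obstruction analysis is right, but the resolution is ``make cross-factor moves free'' (max metric) rather than ``make them forbidden'' (selectors), and without that idea the induction does not close.
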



As Corollary~\ref{georay} is false without the assumption of local finiteness, it must also be the case of Corollary~\ref{biinf}.
However, one may ask the following question:

\begin{quest}
Does every transitive graph that contains a geodesic ray contain a bi-infinite geodesic path?
\end{quest}

Even though I do not know the answer to this question, we can understand the situation for Cayley graphs of abelian groups (Proposition~\ref{yesabelian} and Proposition~\ref{cexlemma}): if the assumption of local finiteness is replaced by that of being a Cayley graph of an abelian group, then Lemma~\ref{closedlemma} does not hold, but its Corollary~\ref{biinf} remains true.

\begin{prop}\label{yesabelian}
Let $\mathcal{G}$ be a Cayley graph of an abelian group. Assume that $\mathcal{G}$ contains a geodesic ray. Then, $\mathcal{G}$ contains a bi-infinite geodesic ray.
\end{prop}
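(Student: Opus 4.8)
The plan is to sidestep compactness altogether --- local finiteness, which powered Corollary~\ref{biinf}, is no longer available --- and to exploit commutativity by hand. First I would normalise the data. Left-translations are graph automorphisms of a Cayley graph and preserve geodesics (Fact~\ref{prelem}), so I may compose the given geodesic ray $\gamma\colon\N\to G$ with a translation and assume $\gamma(0)=e$. Writing $G$ additively and setting $|g|:=d(e,g)$, I record $a_k:=\gamma(k)-\gamma(k-1)$ for $k\ge1$: each $a_k$ lies in $S\cup S^{-1}$ because consecutive vertices of a geodesic are adjacent, and the hypothesis that $\gamma$ is geodesic becomes $|\gamma(n)-\gamma(m)|=n-m$ for $m\le n$; in particular $|a_1+\dots+a_N|=N$ for every $N$.

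The heart of the argument --- and the only step using commutativity --- is the following additivity property: for every finite set of indices $T\subseteq\{1,2,3,\dots\}$, one has $|\sum_{k\in T}a_k|=|T|$. The upper bound is clear since the left-hand side is a sum of $|T|$ elements of $S\cup S^{-1}$. For the lower bound I would put $N:=\max T$ and, using that $G$ is abelian, rewrite the geodesic word as $\gamma(N)=\big(\sum_{k\in T}a_k\big)+\big(\sum_{k\in\{1,\dots,N\}\setminus T}a_k\big)$; since the two pieces have lengths at most $|T|$ and $N-|T|$ while their sum $\gamma(N)$ has length $N$, the triangle inequality forces both to be equalities. I expect this to be the genuinely new point: it is exactly the statement that fails without commutativity, which fits the remark that Lemma~\ref{closedlemma} really does need local finiteness.

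With this in hand the construction is short. I would feed the odd-indexed steps forward and the even-indexed steps backward: set $\delta(0):=e$ and, for $n\ge1$, $\delta(n):=\sum_{i=1}^n a_{2i-1}$ and $\delta(-n):=-\sum_{i=1}^n a_{2i}$. Every consecutive difference $\delta(n)-\delta(n-1)$ equals some $\pm a_k\in S\cup S^{-1}$ and is non-trivial, so $\delta\colon\Z\to G$ is a bona fide bi-infinite path.

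It remains to verify that $\delta$ is geodesic, i.e.\ $|\delta(n)-\delta(m)|=n-m$ for $m<n$. The point is that, in each of the cases $m<n\le0$, $m\le0\le n$ and $0\le m<n$, the negative signs cancel and $\delta(n)-\delta(m)$ simplifies to a sum $\sum_{k\in T}a_k$ with all signs positive, indexed by a set $T$ of cardinality $n-m$; crucially, the forward part only ever contributes odd indices and the backward part only even ones, so $T$ consists of pairwise distinct indices. The additivity claim then yields $|\delta(n)-\delta(m)|=|T|=n-m$, completing the proof. The only mild bookkeeping is the sign cancellation in the mixed case, which is where I would be most careful.
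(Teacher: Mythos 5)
Your proof is correct and is essentially the paper's own argument: the paper constructs the very same bi-infinite path (even-indexed increments of $\gamma$ fed forward, odd-indexed increments, negated, fed backward) and verifies geodesicity via the commutativity rearrangement $\kappa(n)-\kappa(-n)=\gamma(2n)-\gamma(0)$. Your additivity claim for arbitrary finite index sets $T$ and the case-by-case check of all pairs $(m,n)$ are just a more explicit (and slightly more general) version of the computation the paper compresses into one line.
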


\begin{prop}\label{cexlemma}
Lemma~\ref{closedlemma} does not hold if Condition~\ref{locfi} is removed, even if $\mathcal{G}$ is assumed to be a Cayley graph of an abelian group and $\mathfrak{P}$ is assumed to be invariant under $\chemin \mapsto (k\mapsto \chemin(-k))$.
\end{prop}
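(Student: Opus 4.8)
The plan is to take for $\mathfrak{P}$ the set of \emph{all} geodesic paths of a suitably chosen Cayley graph, exploiting that geodesic paths automatically enjoy every invariance and closedness property demanded by Lemma~\ref{closedlemma}, while the existence of a bi-infinite member is precisely what the ray-free abelian examples rule out. Concretely, I would take $\mathcal{G}$ to be a Cayley graph of an abelian group (for instance of $\Z$, or of $\bigoplus_{n\geq 2}\Z/n\Z$) that has infinite diameter and contains no geodesic ray; such a graph is provided by the preceding proposition. I then set $\mathfrak{P}$ to be the set of geodesic paths of $\mathcal{G}$.

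First I would check that $\mathfrak{P}$ satisfies Conditions~\ref{condinv}, \ref{condparam}, \ref{condclos}, \ref{condlong} and the extra reflection invariance, while Condition~\ref{locfi} fails. All the structural properties are delivered by Fact~\ref{prelem}: the set of geodesic paths is closed (Condition~\ref{condclos}), is invariant under graph automorphisms (Condition~\ref{condinv}), and is invariant under every reparametrisation $\gamma\mapsto(\gamma(ak+b))_k$ with $a\in\{-1,1\}$ and $b\in\Z$. Taking $a=1$ yields the translation invariance of Condition~\ref{condparam}, and taking $a=-1,\ b=0$ yields invariance under $\chemin\mapsto(k\mapsto\chemin(-k))$. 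Condition~\ref{condlong} holds because $\mathcal{G}$ has infinite diameter: for each $n$ there are vertices at distance at least $n$, and any geodesic joining them is an element of $\mathfrak{P}$ of length at least $n$. Since $\mathcal{G}$ is a Cayley graph it is transitive, so every hypothesis of Lemma~\ref{closedlemma} other than Condition~\ref{locfi} is met. That Condition~\ref{locfi} genuinely fails comes for free: were $\mathcal{G}$ locally finite, its infinite diameter would force a geodesic ray by Corollary~\ref{georay}, contradicting the choice of $\mathcal{G}$.

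It then remains to see that $\mathfrak{P}$ contains no bi-infinite path, which contradicts the conclusion of Lemma~\ref{closedlemma} and so establishes the proposition. This is immediate: a bi-infinite element of $\mathfrak{P}$ would be a bi-infinite geodesic path $\gamma:\Z\to V$, and its restriction $\gamma|_{\N}$ would be a geodesic ray, contradicting the fact that $\mathcal{G}$ has none. I do not anticipate any serious obstacle, since the genuinely hard work --- producing an abelian, infinite-diameter, ray-free Cayley graph --- has already been carried out in the preceding proposition; the only points needing care are that geodesic paths do form a closed, reparametrisation-invariant and automorphism-invariant family (supplied by Fact~\ref{prelem}) and that the failure of local finiteness is automatic from Corollary~\ref{georay}.
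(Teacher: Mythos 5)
Your proof is correct, and it takes a genuinely different route from the paper's. You recycle the ray-free examples already constructed (an abelian Cayley graph of infinite diameter containing no geodesic ray) and let $\mathfrak{P}$ be the set of \emph{all} geodesic paths: Fact~\ref{prelem} supplies Conditions~\ref{condinv}, \ref{condparam}, \ref{condclos} and the reflection invariance, infinite diameter supplies Condition~\ref{condlong}, and any bi-infinite member of $\mathfrak{P}$ would restrict on $\N$ to a geodesic ray, which the graph lacks. The paper instead works on the \emph{standard} Cayley graph of $\bigoplus_{n\geq 2}\Z/n\Z$ (generators having exactly one non-zero entry, equal to $1$) --- a graph that \emph{does} contain geodesic rays and hence, by Proposition~\ref{yesabelian}, bi-infinite geodesic paths --- and takes for $\mathfrak{P}$ a subtler family: labelling each edge by the coordinate at which its endpoints differ (a labelling preserved by every automorphism), it keeps the paths whose successive edge labels move by $\pm 1$ without backtracking, so that along any member the labels are strictly monotone. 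That $\mathfrak{P}$ contains the ray $k\mapsto(\mathds{1}_{n\leq k+1})_n$, hence paths of arbitrary length, yet a bi-infinite member would yield a map $\Z\to\N$ of the form $k\mapsto \pm k+b$, which is absurd. The trade-off is this: your argument is shorter and fully modular, but the failure it exhibits is the cheapest possible one, since your $\mathfrak{P}$ contains no infinite path at all --- the lemma fails simply because the ambient graph has no rays. The paper's example is sharper: its $\mathfrak{P}$ does contain one-sided infinite paths, and the ambient graph even contains bi-infinite geodesics, so it isolates exactly what breaks down without local finiteness, namely the compactness step that upgrades arbitrarily long (indeed, infinite) paths to bi-infinite ones. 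Both arguments validly establish Proposition~\ref{cexlemma} as stated.
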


\subsubsection{Weak isomorphy and transitivity}

\label{subsubhauss}

Proposition~\ref{hauss} fails to hold for two reasons in the non locally finite case. First, the space of isomorphism classes of rooted graphs cannot be realised as a set. Second, two rooted graphs $(\mathcal{G},o)$ and $(\mathcal{G}',o')$ may satisfy $B_{\mathcal G}(o,n)\simeq B_{\mathcal{G}'}(o',n)$ in the rooted sense for every $n\in \N$, but still not be isomorphic. Consider the example of the proof of Fact~\ref{cexdiam} rooted at $0$ and the same construction where $n$ is taken in $\N\cup\{\infty\}$ and $k$ in $\N$. ``See'' Figure~\ref{localfig}.

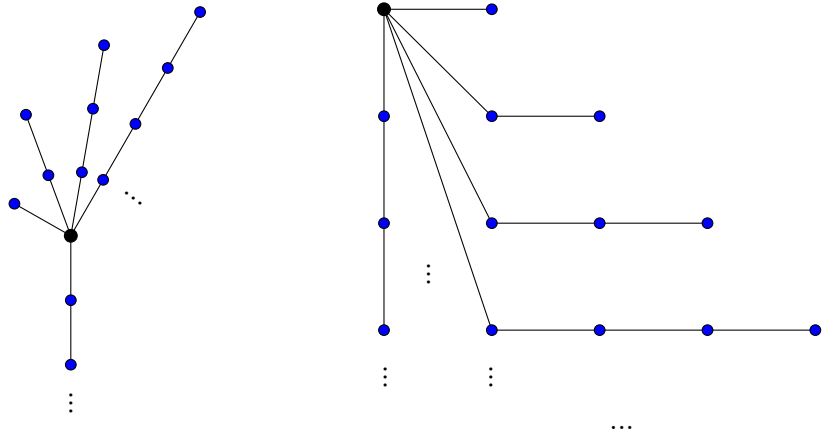
\begin{figure}[h!]
\centering

\definecolor{qqqqff}{rgb}{0.,0.,1.}
\begin{tikzpicture}[line cap=round,line join=round,>=triangle 45,x=0.6446708784256676cm,y=0.6446708784256676cm]
\clip(4.204541897368076,4.100379940939297) rectangle (23.439162872153982,13.696645985252589);
\draw (13.68,12.94)-- (15.88,12.94);
\draw (13.68,12.94)-- (15.88,10.74);
\draw (13.68,12.94)-- (15.88,8.54);
\draw (13.68,12.94)-- (15.88,6.34);
\draw (15.88,10.74)-- (18.08,10.74);
\draw (15.88,8.54)-- (18.08,8.54);
\draw (18.08,8.54)-- (20.28,8.54);
\draw (15.88,6.34)-- (18.08,6.34);
\draw (18.08,6.34)-- (20.28,6.34);
\draw (20.28,6.34)-- (22.48,6.34);
\draw (7.291060215714465,8.281209299426665)-- (6.140630660694447,8.940601849255211);
\draw (7.291060215714465,8.281209299426665)-- (6.833629408840945,9.525815161840942);
\draw (7.291060215714465,8.281209299426665)-- (7.517216447695522,9.587784997413229);
\draw (7.291060215714465,8.281209299426665)-- (7.950452765543011,9.431638854446682);
\draw (6.833629408840945,9.525815161840942)-- (6.376198601967426,10.770421024255219);
\draw (7.517216447695522,9.587784997413229)-- (7.743372679676579,10.894360695399792);
\draw (7.950452765543011,9.431638854446682)-- (8.609845315371556,10.582068409466698);
\draw (8.609845315371556,10.582068409466698)-- (9.269237865200102,11.732497964486715);
\draw (9.269237865200102,11.732497964486715)-- (9.928630415028648,12.882927519506731);
\draw (7.743372679676579,10.894360695399792)-- (7.969528911657636,12.200936393386355);
\draw (15.62000042,6.289002021221276) node[anchor=north west] {$\rotatebox{90.0}{ \text{ ... }  }$};
\draw (17.672985468334137,4.579335436443212) node[anchor=north west] {$\rotatebox{0.0}{ \text{ ... }  }$};
\draw (14.346970862790185,8.407475957904474) node[anchor=north west] {$\rotatebox{90.0}{ \text{ ... }  }$};
\draw (7.81,9.664500042) node[anchor=north west] {$\rotatebox{-35.0}{ \text{ ... }  }$};
\draw (7.0523,5.769905758590294) node[anchor=north west] {$\rotatebox{90.0}{ \text{ ... }  }$};
\draw (7.291060215714465,8.281209299426665)-- (7.291060215714463,6.955205266313288);
\draw (7.291060215714461,5.629201233199911)-- (7.291060215714463,6.955205266313288);
\draw (13.68,12.94)-- (13.68,10.74);
\draw (13.68,10.74)-- (13.68,8.54);
\draw (13.68,8.54)-- (13.68,6.34);
\draw (13.462304253445538,6.289002021221276) node[anchor=north west] {$\rotatebox{90.0}{ \text{ ... }  }$};
\begin{scriptsize}
\draw [fill=black] (13.68,12.94) circle (2.4pt);
\draw [fill=qqqqff] (15.88,12.94) circle (2.0pt);
\draw [fill=qqqqff] (15.88,10.74) circle (2.0pt);
\draw [fill=qqqqff] (15.88,8.54) circle (2.0pt);
\draw [fill=qqqqff] (18.08,10.74) circle (2.0pt);
\draw [fill=qqqqff] (18.08,8.54) circle (2.0pt);
\draw [fill=qqqqff] (20.28,8.54) circle (2.0pt);
\draw [fill=qqqqff] (15.88,6.34) circle (2.0pt);
\draw [fill=qqqqff] (18.08,6.34) circle (2.0pt);
\draw [fill=qqqqff] (20.28,6.34) circle (2.0pt);
\draw [fill=qqqqff] (22.48,6.34) circle (2.0pt);
\draw [fill=black] (7.291060215714465,8.281209299426665) circle (2.4pt);
\draw [fill=qqqqff] (6.140630660694447,8.940601849255211) circle (2.0pt);
\draw [fill=qqqqff] (6.833629408840945,9.525815161840942) circle (2.0pt);
\draw [fill=qqqqff] (7.517216447695522,9.587784997413229) circle (2.0pt);
\draw [fill=qqqqff] (7.950452765543011,9.431638854446682) circle (2.0pt);
\draw [fill=qqqqff] (6.376198601967426,10.770421024255219) circle (2.0pt);
\draw [fill=qqqqff] (7.743372679676579,10.894360695399792) circle (2.0pt);
\draw [fill=qqqqff] (7.969528911657636,12.200936393386355) circle (2.0pt);
\draw [fill=qqqqff] (8.609845315371556,10.582068409466698) circle (2.0pt);
\draw [fill=qqqqff] (9.269237865200102,11.732497964486715) circle (2.0pt);
\draw [fill=qqqqff] (9.928630415028648,12.882927519506731) circle (2.0pt);
\draw [fill=qqqqff] (7.291060215714463,6.955205266313288) circle (2.0pt);
\draw [fill=qqqqff] (7.291060215714461,5.629201233199911) circle (2.0pt);
\draw [fill=qqqqff] (13.68,10.74) circle (2.0pt);
\draw [fill=qqqqff] (13.68,8.54) circle (2.0pt);
\draw [fill=qqqqff] (13.68,6.34) circle (2.0pt);
\end{scriptsize}
\end{tikzpicture}

\caption{Two ways of picturing the same rooted graph.}
\label{localfig}
\end{figure}

One can also find such counterexamples that are Cayley graphs of abelian groups, so that Corollary~\ref{uncoro} does not hold beyond the locally finite case.

\begin{prop}
There are two Cayley graphs of abelian groups that are weakly isomorphic but not isomorphic.
\end{prop}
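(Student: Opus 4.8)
The plan is to exhibit two explicit Cayley graphs and to engineer them so that the mechanism behind Figure~\ref{localfig} --- an infinite structure that stays invisible at every finite radius because it merely adds one more copy of something already present infinitely often --- can be carried out transitively. Let $c_n := n+1$ for $n \ge 1$, so that $\bigoplus_{n\ge1}\Z/c_n\Z \cong \bigoplus_{m\ge2}\Z/m\Z$, and equip this group with the generating set
$$
S_1 := \Big\{ g \neq 0 \ :\ g_i \in \{0,\pm 1\} \text{ for every } i \Big\},
$$
i.e.\ all nonzero (automatically finitely supported) vectors whose every coordinate is $0$ or $\pm 1$. Put $\mathcal{G}_1 := \Cay\big(\bigoplus_{n\ge1}\Z/c_n\Z,\, S_1\big)$, and let $\mathcal{G}_2 := \Cay\big(\Z \oplus \bigoplus_{n\ge1}\Z/c_n\Z,\, S_2\big)$, where $S_2$ is defined by the same recipe, the extra $\Z$-coordinate (indexed $0$) also being allowed to take the values $0,\pm1$. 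Both are Cayley graphs of abelian groups. The point of this unusual generating set --- rather than the standard one --- is that it produces a \emph{maximum} metric instead of an $\ell^1$ (sum) metric, which is exactly what forbids the chaining of short geodesics into an infinite one.

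Concretely, writing $f_i$ for the graph distance from the origin inside the $i$-th factor (so $f_i(a)=\min(\bar a, c_i-\bar a)$ on $\Z/c_i\Z$ and $f_0(a)=|a|$ on $\Z$) and $g_i$ for the standard generator of the $i$-th factor, I will first prove the distance formula
$$
d(0,x) = \max_i f_i(x_i).
$$
The lower bound is immediate: a single generator alters each coordinate by at most one step, so $t$ steps cannot bring any coordinate to distance more than $t$. The upper bound follows by moving all coordinates toward their targets \emph{in parallel}, one step each per move, which is legitimate since each move lies in $\{0,\pm1\}$ on every coordinate and only finitely many coordinates are involved. From this formula I read off three facts: the diameter is infinite (an antipode $\lfloor c_n/2\rfloor g_n$ has distance $\lfloor c_n/2\rfloor \to\infty$); the graph $\mathcal{G}_2$ contains a geodesic ray, namely the $\Z$-axis $k\mapsto k g_0$, which never dead-ends because $f_0$ is unbounded; and $\mathcal{G}_1$ contains \emph{no} geodesic ray (so in particular $\mathcal{G}_1$ is of the kind produced by the earlier Proposition). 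This last point is the decisive computation: if $\gamma$ were a geodesic ray with $\gamma(0)=0$, then for each $k$ some coordinate $i$ satisfies $f_i(\gamma(k)_i)=k$; since $f_i(\gamma(\cdot)_i)$ starts at $0$ and changes by at most $1$ per step, it must increase by exactly $1$ at every step up to $k$, so in particular $\gamma(1)_i\neq 0$, forcing $i$ to lie in the support of the first generator $\gamma(1)$. That support is finite, and these finitely many torsion coordinates have $f_i$ bounded by $\max_{i}\lfloor c_i/2\rfloor<\infty$; hence no coordinate can realise the value $k$ once $k$ exceeds this bound --- a contradiction.

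It remains --- and this is the heart of the argument --- to prove that $\mathcal{G}_1$ and $\mathcal{G}_2$ are weakly isomorphic, i.e.\ that $B_{\mathcal{G}_1}(0,k)\simeq B_{\mathcal{G}_2}(0,k)$ as rooted graphs for every $k$ (both graphs being transitive, hence weakly transitive, so balls are well defined up to isomorphism). The ball $B(0,k)$ is, by the distance formula, the set of finitely supported $x$ with $f_i(x_i)\le k$ for all $i$, with $x\sim y$ iff $x\neq y$ and $x_i-y_i\in\{0,\pm1\}$ for all $i$. Thus $B(0,k)$ is the coordinatewise (strong-product-type) graph built on the box of per-factor $k$-balls $L_i^{(k)}$, restricted to finite support, and its rooted isomorphism type depends only on the multiset of the rooted factor-balls $\big(C_i, L_i^{(k)}, 0\big)$: a bijection of the index sets together with rooted factor-isomorphisms (which fix $0$, hence preserve finite support) assembles into a rooted isomorphism of balls. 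Now for $c_i>2k$ the factor-ball $L_i^{(k)}$ is the path on $2k+1$ vertices rooted at its centre, and this is exactly the factor-ball of the $\Z$-coordinate as well. Since $c_n=n+1\to\infty$, infinitely many factors of $\mathcal{G}_1$ are of this type; therefore the multiset for $\mathcal{G}_2$ equals that for $\mathcal{G}_1$ with one extra copy of a type already occurring with infinite multiplicity --- the two multisets coincide, and the balls are isomorphic.

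I expect the main obstacle to be this last \emph{absorption} step: making rigorous that the extra $\Z$-factor is invisible at every finite radius is precisely the transitive incarnation of the Figure~\ref{localfig} phenomenon, and it requires the careful bookkeeping that the ball really is determined by its multiset of factor-balls (finite-support constraint included). Once weak isomorphism is established, the two graphs cannot be isomorphic, since graph isomorphisms preserve distances and hence the existence of geodesic rays, which separates $\mathcal{G}_2$ from $\mathcal{G}_1$. The choice of the maximum metric (via the $\{0,\pm1\}$ generating set) is what simultaneously kills geodesic rays in $\mathcal{G}_1$ and lets the $\Z$-axis survive as a ray in $\mathcal{G}_2$ without disturbing any finite ball.
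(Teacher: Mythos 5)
Your proof is correct, but it takes a genuinely different route from the paper's. The paper keeps the standard single-coordinate generating set on $G=\bigoplus_{n\geq2}\Z/n\Z$ and $G'=G\oplus\Z$ (so the word metric is of $\ell^1$ type), proves weak isomorphism by realising both Cayley graphs as quotients of $\Cay\big(\bigoplus_{n\geq2}\Z,\,\text{standard basis}\big)$ by subgroups $H$ and $H_N$ having the same intersection with the ball of radius $2N+1$ (a re-indexing of the cyclic factors that absorbs the extra $\Z$), and then separates the two graphs by a rather fine invariant: $\Cay(G',S')$ contains a \zigzagfree\ bi-infinite geodesic path (the $\Z$-axis) while $\Cay(G,S)$ does not. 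That finer invariant is unavoidable there, because with the $\ell^1$ metric \emph{both} of the paper's graphs contain geodesic rays. You instead take the box generating set $\{g\neq0 : g_i\in\{0,\pm1\}\ \forall i\}$, which makes the word metric the max metric, and this buys you two simplifications at once: the balls decompose as (finitely supported) strong products of factor-balls, so weak isomorphism follows from your multiset-absorption argument --- the extra $\Z$-factor contributes one more copy of the rooted path on $2k+1$ vertices, a type already occurring with infinite multiplicity --- and the distinguishing invariant can be the crudest one available, namely existence of a geodesic ray, since the max metric kills all geodesic rays in $\mathcal{G}_1$ (your $\mathcal{G}_1$ is precisely the graph of the paper's first proposition in Section~2) while the $\Z$-axis survives as a ray in $\mathcal{G}_2$. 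The paper's lattice-alignment proof of weak isomorphism is shorter; your strong-product bookkeeping is more explicit and makes the ``invisible extra factor'' mechanism completely transparent. One trivial slip: the radius-$k$ factor-ball in the cycle of length $c_i$ is a path on $2k+1$ vertices when $c_i\geq 2k+2$, not when $c_i>2k$ (for $c_i=2k+1$ it is the whole cycle); this is harmless, since infinitely many $c_i$ exceed $2k+1$ and the sub-threshold torsion factors are common to both graphs anyway.
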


Corollary~\ref{deucoro} does not hold without local finiteness.

\begin{prop}
There exists a graph that is weakly transitive and weakly isomorphic to a transitive graph, but that is not transitive.
\end{prop}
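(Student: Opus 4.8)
The plan is to leverage the preceding proposition, which provides two Cayley graphs $\mathcal A$ and $\mathcal B$ of abelian groups that are weakly isomorphic but not isomorphic; being Cayley graphs, both are transitive. The guiding observation is that the disjoint union $\mathcal A \sqcup \mathcal B$ already witnesses everything we want, \emph{except} connectedness: every ball of finite radius in $\mathcal A \sqcup \mathcal B$ lies in a single component, hence is isomorphic to the common ball of $\mathcal A$ and $\mathcal B$, so $\mathcal A \sqcup \mathcal B$ is weakly transitive and weakly isomorphic to the transitive graph $\mathcal A$; and since an automorphism must preserve connected components while $\mathcal A \not\simeq \mathcal B$, no automorphism carries an $\mathcal A$-vertex to a $\mathcal B$-vertex, so the union is not transitive. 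The whole difficulty is therefore to turn this into a genuine (connected) graph without destroying the uniformity of balls.

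First I would note that non-local-finiteness is unavoidable: by Corollary~\ref{deucoro}, a locally finite weakly transitive graph is automatically transitive, so any connected example must have vertices of infinite degree --- which is exactly what the Cayley graphs $\mathcal A$ and $\mathcal B$ (built with infinite generating systems) provide. The idea is then to connect $\mathcal A$ and $\mathcal B$ by a single bridging edge $\{a_0,b_0\}$ with $a_0\in\mathcal A$ and $b_0\in\mathcal B$, and to check that, because each vertex already carries infinitely many mutually isomorphic ``branches'' and because the weak isomorphism of $\mathcal A$ and $\mathcal B$ makes the new $\mathcal B$-branch (resp.\ $\mathcal A$-branch) indistinguishable to every finite order from those already present, the isomorphism type of each ball is unchanged by the grafting. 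In other words, adding one more branch of a type already occurring infinitely often should be invisible in every finite ball --- the same ``$\infty+1=\infty$'' phenomenon underlying the failure of Proposition~\ref{hauss} in the non locally finite case.

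For non-transitivity I would argue via the bridge: choosing $\mathcal A$ and $\mathcal B$ to be $2$-edge-connected (which holds for these Cayley graphs, since the infinite generating set forces every edge to lie on a cycle), the edge $\{a_0,b_0\}$ is the unique cut edge of the resulting graph $\mathcal H$. Any automorphism of $\mathcal H$ must preserve the set of cut edges, hence fix $\{a_0,b_0\}$ setwise and map each of the two sides to a side; as $\mathcal A\not\simeq\mathcal B$ the two sides cannot be interchanged, so an $\mathcal A$-vertex is never sent to a $\mathcal B$-vertex and $\mathcal H$ has at least two orbits. Combined with the ball computation, $\mathcal H$ is then weakly transitive, weakly isomorphic to the transitive graph $\mathcal A$, and not transitive, as required.

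The main obstacle is the invisibility of the grafting. It is not enough to control the balls centred at $a_0$ and $b_0$: attaching the bridge perturbs \emph{every} ball whose centre lies within its radius of $a_0$ or $b_0$, so one must show that all of these balls remain isomorphic to the common ball. I expect this to force a careful use of the explicit combinatorics of $\mathcal A$ and $\mathcal B$ from the previous proposition --- concretely, the fact that their common local structure is, near every vertex, a hub from which infinitely many isomorphic finite pieces emanate, so that a ball of radius $n$ ``does not notice'' the addition of one further piece matching its neighbours to order $n$. Verifying this uniform absorption, rather than the comparatively soft bridge and disjoint-union arguments, is where the real work lies.
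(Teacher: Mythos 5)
Your plan founders on the very point you flag as ``where the real work lies,'' and the trouble is not that the verification is delicate --- the absorption claim is simply false for the single-bridge construction. Take $\mathcal{A}=\Cay(G,S)$ with $G=\bigoplus_{n\geq2}\Z/n\Z$ as in Proposition~\ref{nonhausdorff}, identify $a_0$ with $0$, and write $e_n$ for the element whose only non-zero entry is a $1$ in coordinate $n$. The punctured ball $B_{\mathcal{A}}(a_0,2)\setminus\{a_0\}$ is \emph{connected}: two neighbours $\pm e_n$ and $\pm e_m$ of $a_0$ with $n\neq m$ are joined through their sum, which lies at distance $2$; the vertices $e_n$ and $-e_n$ are joined through $e_n+e_m$, $e_m$, $e_m-e_n$; and every distance-$2$ vertex is adjacent to a distance-$1$ vertex (first step of a geodesic). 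By contrast, in your bridged graph $\mathcal{H}$ the set $\{b_0\}\cup B_{\mathcal{B}}(b_0,1)$ lies inside $B_{\mathcal{H}}(a_0,2)$ and meets the $\mathcal{A}$-side only through the bridge, so $B_{\mathcal{H}}(a_0,2)\setminus\{a_0\}$ is \emph{disconnected}. A rooted isomorphism sends root to root, and ``the root is a cut vertex of the ball'' is invariant under such isomorphisms; hence $B_{\mathcal{H}}(a_0,2)$ is not isomorphic, as a rooted graph, to $B_{\mathcal{H}}(a,2)=B_{\mathcal{A}}(a,2)$ for any $a$ at distance greater than $2$ from $a_0$ (such $a$ exist since $\mathcal{A}$ has infinite diameter). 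So $\mathcal{H}$ is not weakly transitive, and no further care in the bookkeeping can rescue it.

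The underlying reason is that the Hilbert-hotel principle ``$\infty+1=\infty$'' you invoke requires infinitely many pendant branches of the matching type to be \emph{already} hanging at the attachment vertex, whereas in these Cayley graphs the balls of radius at least $2$ have no pendant branches at all at interior vertices (their punctured versions stay connected); adding one genuinely pendant branch therefore changes the isomorphism type of every ball that contains it. (The radius-$1$ ball, whose punctured version does split into infinitely many pieces, is what makes the claim look plausible; the ball ``closes up'' from radius $2$ on.) The repair this forces --- attach a pendant copy of $\mathcal{B}$ at \emph{every} vertex of $\mathcal{A}$, then at every vertex of every copy of $\mathcal{B}$, and so on recursively --- is exactly the paper's construction: a graph of words in which each vertex carries, via a cut edge, the root of a fresh copy of $\mathcal{G}'$. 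That graph is weakly isomorphic not to $\mathcal{G}$ but to the (transitive) free product of $\mathcal{G}$ with $K_2$; weak transitivity follows because $\mathcal{G}$ and $\mathcal{G}'$ are weakly isomorphic, so the grafting is the same to every finite order at every vertex; and non-transitivity is proved essentially by your cut-edge argument: automorphisms permute the components obtained by deleting the cut edges, and the component of $[0_G]$ is isomorphic to $\mathcal{G}$ while that of $[0_G;0_{G'}]$ is isomorphic to $\mathcal{G}'$, which are not isomorphic.
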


Here is a natural question:

\begin{quest}
Is there a weakly transitive graph which is not weakly isomorphic to any transitive graph?
\end{quest}

I currently do not know how to solve this question, but I expect such a graph to exist. Such a graph would have the flavour of a Penrose tiling (a ``quasi-periodic'' tiling of the plane made of tiles that cannot tile the plane periodically). Indeed, we would not have tiles but a system of balls\footnote{i.e.\ the data for each radius $k\in \N$ of a rooted graph which is thought of as the prescribed ball of radius $k$ up to isomorphism} that can be nicely put together\footnote{which does not mean that ``they can tile the plane quasi-periodically'' but that ``there is a graph such that, at every vertex, the balls of every radius are isomorphic to the prescribed ones''} but such that this cannot be done ``periodically''\footnote{meaning here that no way of nicely putting together the balls of the system yields a transitive graph}.
This question naturally leads to the following problem, which I cannot solve either.

\begin{problem}
Given $(m,n)\in\N^2$ such that $m\leq n$, determine whether there is a rooted graph $(\mathcal{B},o)$ such that the following two properties hold:
\begin{itemize}
\item there is a graph $\mathcal{G}=(V,E)$ such that, for every $v\in V$, the rooted graphs $(B_{\mathcal G}(v,n),v)$ and $(\mathcal{B},o)$ are isomorphic
\item and there is no \emph{transitive} graph $\mathcal{G}=(V,E)$ such that, for every $v\in V$, the rooted graphs $(B_{\mathcal G}(v,m),v)$ and $(B_{\mathcal B}(o,m),o)$ are isomorphic.
\end{itemize}
\end{problem}

\noindent I cannot answer this problem either in the finite or locally finite settings.

\section{The study of geodesic paths}

Here is an easy example of a transitive graph of infinite diameter that does not contain any geodesic ray.
\begin{prop}
Let $G$ denote the group $\bigoplus_{n\geq 2} \Z/n\Z$ and $S$ be the generating subset $\{g\in G : \forall n,~g_n \in\{-1,0,1\}\}$. The Cayley graph of $G$ relative to $S$ has infinite diameter but does not contain any geodesic ray.
\end{prop}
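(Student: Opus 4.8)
The plan is to make the word metric of $\Cay(G,S)$ completely explicit and then read off both assertions. Write $G$ additively, an element as $g=(g_n)_{n\geq 2}$ with $g_n\in\Z/n\Z$ and only finitely many $g_n$ nonzero. Since $S=S^{-1}$, two vertices $g,h$ are adjacent exactly when $h\neq g$ and every coordinate of $h-g$ lies in $\{-1,0,1\}$. For $a\in\Z/n\Z$ let $\|a\|_n:=\min\{|z|:z\in\Z,\ z\equiv a \pmod n\}$ be the cyclic distance from $0$ to $a$, so that $\|a\|_n\leq \lfloor n/2\rfloor$. I claim the word length is given by
$$
d(0,g)=\max_{n\geq 2}\|g_n\|_n,
$$
the maximum being over the finite support of $g$.

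For the two inequalities: if $g=s^{(1)}+\cdots+s^{(k)}$ with each $s^{(j)}\in S$, then in each coordinate $g_n\equiv\sum_j s^{(j)}_n \pmod n$ with $s^{(j)}_n\in\{-1,0,1\}$, so the integer $\sum_j s^{(j)}_n$ is congruent to $g_n$ and has absolute value at most its number of nonzero terms, hence at most $k$; thus $\|g_n\|_n\leq k$ for every $n$, giving $d(0,g)\geq \max_n\|g_n\|_n$. Conversely, setting $D:=\max_n\|g_n\|_n$, one drives all coordinates to their targets \emph{in parallel}: in coordinate $n$ use $\|g_n\|_n$ steps of a fixed sign to reach $g_n$ and fill the remaining $D-\|g_n\|_n$ slots with $0$. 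The resulting $s^{(1)},\dots,s^{(D)}$ are each supported on the finite set $\mathrm{supp}(g)$ with values in $\{-1,0,1\}$, so lie in $S$, exhibiting $g$ as a word of length $D$. Infinite diameter is then immediate: the element with $m=\lfloor n/2\rfloor$ in coordinate $n$ and $0$ elsewhere is at distance $\|m\|_n=\lfloor n/2\rfloor$ from $0$, and this tends to $\infty$ with $n$.

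The nonexistence of a geodesic ray is where finiteness of supports does the work. Suppose $\chemin:\N\to G$ is a geodesic ray; translating by $-\chemin(0)$ (a graph automorphism) I may assume $\chemin(0)=0$, and I set $s^{(j)}:=\chemin(j)-\chemin(j-1)\in S$. Being geodesic gives $d(0,\chemin(k))=k$, so by the formula there is for each $k$ a coordinate $n_k$ with $\|\chemin(k)_{n_k}\|_{n_k}=k$. Since $\|\cdot\|_{n}\leq\lfloor n/2\rfloor$ this forces $n_k\geq 2k$, so the $n_k$ take infinitely many distinct values. On the other hand, $\chemin(k)_{n_k}\equiv\sum_{j=1}^k s^{(j)}_{n_k}\pmod{n_k}$, and the bound ``cyclic distance $\leq$ number of nonzero increments'' forces all $k$ increments $s^{(1)}_{n_k},\dots,s^{(k)}_{n_k}$ to be nonzero; in particular $s^{(1)}_{n_k}\neq0$, i.e.\ $n_k\in\mathrm{supp}(s^{(1)})$. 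Thus the finite support of the single generator $s^{(1)}=\chemin(1)$ would contain the infinitely many distinct coordinates $n_k$, a contradiction.

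I expect the only genuinely delicate point to be the word-length formula, and specifically the observation that the coordinates can be reached simultaneously, so that the length is a \emph{maximum} — not a sum — of the coordinatewise cyclic distances. Once this is in hand, infinite diameter and the growth $n_k\geq 2k$ are formal, and the whole obstruction to geodesic rays collapses to the elementary fact that a fixed element of $G$ has finite support, whereas a geodesic ray would require its very first step to move infinitely many coordinates.
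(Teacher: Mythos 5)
Your proposal is correct and follows essentially the same route as the paper's: both arguments rest on the ``parallel moves'' upper bound for the word metric (coordinates can be driven to their targets simultaneously, so distances behave like a maximum rather than a sum of coordinatewise cyclic distances) and on the finite support of the first increment $\gamma(1)$ of a putative geodesic ray. The only difference is one of packaging: you establish the exact word-length formula $d(0,g)=\max_{n}\|g_n\|_n$ and reach a contradiction at infinity (the witnesses $n_k\geq 2k$ would all have to lie in the finite support of $\gamma(1)$), whereas the paper uses only the upper bound and directly exhibits the failure of geodesicity, showing $d(\gamma(0),\gamma(N))\leq N-1$ as soon as $N$ exceeds the support of $\gamma(1)$.
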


\begin{proof}
Let $G$ and $S$ be as in the statement of the proposition, and let $\mathcal{G}$ denote the corresponding Cayley graph. The graph $\mathcal{G}$ has infinite diameter, as $g\mapsto g_n$ defines a 1-Lipschitz map onto $\Cay(\Z/n\Z,\{1\})$ and the diameter of $\Cay(\Z/n\Z,\{1\})$ tends to infinity as $n$ goes to infinity.

Now, let $\chemin : \N \to G$ be a path in $\mathcal{G}$ and let us prove that $\chemin$ is not geodesic. Without loss of generality, we may assume that $\chemin(0)=0$. There is some $N\geq 2$ such that $\forall n\geq N,~\chemin(1)_n=0$. Fix such an $N$ and notice that $d(\chemin(0),\chemin(N))\leq N-1$. Indeed, for every integer $n \geq 2$, either $n < N$ and the diameter of $\Cay(\Z/n\Z,\{1\})$ is at most $N-1$, or $n\geq N$ and $\chemin(1)_n=0$ implies that $\chemin(N)_n$ lies in the ball of centre $0$ and radius $N-1$ in $\Cay(\Z/n\Z,\{1\})$.

\end{proof}

We also present the following example for its flavour of additive number theory.

\begin{prop}\label{propz}
Let $S$ denote the (generating) subset of $\Z$ consisting of the elements that can be written as $\sum_{n\geq 1} \varepsilon_n n!$ for some finitely supported element $\varepsilon$ of $\{-1,0,1\}^{\N^\star}$.
The Cayley graph of $\Z$ relative to $S$ has infinite diameter but does not contain any geodesic ray.
\end{prop}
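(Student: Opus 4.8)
The plan is to reduce both halves of the statement to a clean combinatorial description of the graph distance in terms of \emph{factorial-base representations}, and then argue the diameter by a counting argument in a finite quotient and the absence of a geodesic ray by a ``carrying'' argument. Write $\mathcal{G}:=\Cay(\Z,S)$ and let $d$ be its graph distance; since addition makes left-translations graph automorphisms, $d(a,b)=d(0,b-a)$. The first thing I would establish, as it governs everything, is the identity
\[
d(0,m)=\min\Big\{\ \max_{j\ge 1}|c_j|\ :\ m=\textstyle\sum_{j\ge 1}c_j\,j!\ \Big\},
\]
the minimum ranging over finitely supported integer sequences $(c_j)_{j\ge1}$. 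Indeed $S=-S$ and $1=1!\in S$, so $S$ generates $\Z$ and distances are finite; and writing $m$ as a sum of $r$ elements $s_i=\sum_j\varepsilon^{(i)}_j j!\in S$ (with $\varepsilon^{(i)}_j\in\{-1,0,1\}$) is exactly the same as writing $c_j=\sum_{i=1}^r\varepsilon^{(i)}_j$, which is possible if and only if $\max_j|c_j|\le r$. Both halves of the proposition are most naturally phrased through this identity.

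For the infinite diameter, the plan is to transport the problem to a finite quotient where one can count. For $N\ge 2$, reduction modulo $N!$ does not increase distances and maps $\mathcal{G}$ onto $\Cay(\Z/N!\Z,\overline S)$, where $\overline S$ is the image of $S$. Since $\sum_j\varepsilon_j j!\equiv\sum_{j<N}\varepsilon_j j!\pmod{N!}$, the set $\overline S$ has at most $3^{\,N-1}$ elements, so a ball of radius $R$ in the quotient meets at most $(1+3^{\,N-1})^R$ of the $N!$ residues. As $N!$ grows faster than $(1+3^{\,N-1})^R$ for fixed $R$, the quotient has diameter $>R$ once $N$ is large; since reduction is $1$-Lipschitz, so has $\mathcal{G}$. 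As $R$ is arbitrary, $\mathcal{G}$ has infinite diameter.

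The heart of the matter is the absence of a geodesic ray, where the plan mirrors the previous proposition: a ray can only ``commit'' finitely many factorials in its first steps. Suppose $\gamma\colon\N\to\Z$ is a geodesic ray; after translating assume $\gamma(0)=0$, and set $s_k:=\gamma(k)-\gamma(k-1)=\sum_j\varepsilon^{(k)}_j j!\in S$. Choose $N$ larger than every factorial index occurring in $s_1$ or in $s_2$; this is possible because each $s_k$ has finite support. Consider the natural representation $\gamma(m)=\sum_j C_j\,j!$ with $C_j:=\sum_{k=1}^m\varepsilon^{(k)}_j$, so $|C_j|\le m$ for all $j$, and \emph{crucially} $|C_j|\le m-2$ for every $j>N$, since the terms $k=1,2$ vanish there. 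Being geodesic means $d(0,\gamma(m))=m$, i.e.\ no representation of $\gamma(m)$ has all digits $<m$ in absolute value. I would contradict this, for all large $m$, by transforming $(C_j)$ into a representation with $\max_j|C_j|\le m-1$.

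The transformation uses the elementary carry $c\,j!=\big(c-(j+1)\big)j!+(j+1)!$, which lowers the digit at $j$ by $j+1$ and raises the digit at $j+1$ by $1$. Sweeping $j=1,2,\dots$ and applying one such carry (with the appropriate sign) whenever the current digit reaches absolute value $m$, each offending digit drops to absolute value $\le m-1$ — a computation valid precisely while $j\le 2m-2$ — at the cost of a $\pm1$ bump at position $j+1$. The crux, and the step I expect to be the main obstacle, is to show this cascade cannot run away: for $j>N$ we have $|C_j|\le m-2$, so a single $\pm1$ carry-in leaves that digit $\le m-1$ and \emph{produces no further carry}. Hence every carry occurs at a position $\le N\le 2m-2$ (for $m$ large), all carries are legitimate, and the cascade halts by position $N+1$; the resulting representation has all digits $\le m-1$, so $d(0,\gamma(m))\le m-1<m$, the desired contradiction. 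The delicate points are the sign bookkeeping in the carries and the fact that one must look at the first \emph{two} steps: a single step only yields $|C_j|\le m-1$ for $j>N$, which a carry-in can still push up to $m$, whereas two steps create the slack $m-2$ that stops the cascade.
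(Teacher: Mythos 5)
Your proof is correct, but it follows a genuinely different route from the paper's, especially for the harder half (no geodesic ray). The key object in your argument --- the exact metric formula $d(0,m)=\min\{\max_{j\ge 1}|c_j| : m=\sum_{j\ge 1}c_j\,j!\}$ over signed-integer factorial representations --- never appears in the paper, which manipulates paths rather than computing distances. For the infinite diameter both arguments pass through the quotients $\Z/N!\Z$, but the paper compares $\Cay(\Z/N!\Z,\pi(S))$ to the cycle $\Cay(\Z/N!\Z,\{1\})$ via a Lipschitz constant $1+2+\dots+(N-1)!\le e\,(N-1)!$, getting diameter at least $N/(2e)$, while you count residues in a ball; both are routine once the quotient is introduced. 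For the geodesic ray, the paper performs surgery on the ray itself: by an infinite-descent argument it first modifies $\gamma$ so that every digit of the first increment $\Delta(1)$ recurs in all later increments $\Delta(k)$; then, choosing $K$ larger than $\Delta(1)>0$ and than the top digit of $\Delta(2)$ (so that in particular $\Delta(1)$ divides $K!$), it builds a competing $1$-Lipschitz path with increments $\Delta(k)-\Delta(1)+\mathds{1}_{k=2}K!$ that reaches $\gamma(K!/\Delta(1))$ one step too fast, a contradiction. You instead work on the digit representation of $\gamma(m)$ obtained by summing increments: its digits are bounded by $m$, with slack $m-2$ beyond position $N$, and a terminating carry cascade yields a representation with all digits at most $m-1$, contradicting your formula. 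The trade-off: the paper's surgery is shorter and needs no carry bookkeeping (whose delicate points --- the signs, the range $j\le 2m-2$, and the need for two steps' worth of slack --- you did handle correctly), whereas your route isolates a reusable, quantitative description of the metric, makes the diameter half an immediate counting corollary, and avoids the paper's descent/modification step entirely.
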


\begin{proof}
Let $S$ be as in the proposition above and let $\mathcal{G}$ denote the Cayley graph of $\Z$ relative to $S$. First, let us prove that $\mathcal{G}$ has infinite diameter. Let $N\geq 2$. Let $\pi$ denote the canonical projection $\Z\to \Z/N!\Z$. The map $\pi$ is $1$-Lipschitz from $\mathcal G$ onto $\Cay(\Z/N!\Z, \pi(S))$. Besides, the identity map of $\Z/N!\Z$ is $(1+2+\dots+(N-1)!)$-Lipschitz, hence $e(N-1)!$-Lipschitz, from $\Cay(\Z/N!\Z, \pi(S))$ to $\Cay(\Z/N!\Z, \{1\})$. As a result, $\mathcal{G}$ has diameter at least $\frac{N!}{2}\times \frac{1}{e(N-1)!}=\frac{N}{2e}$. As $N$ is arbitrary, the diameter of $\mathcal{G}$ must be infinite.

Now, let us assume for contradiction that $\mathcal{G}$ admits a geodesic ray $\gamma : \N \to \Z$.
For every $k \in \N^\star$, fix $\varepsilon^{(k)}$ such that $\Delta(k):=\gamma(k)-\gamma(k-1)=\sum_{n\geq 1} \varepsilon_n^{(k)} n!$.
First, let us show that without loss of generality, we may change $\gamma$ --- and the $\varepsilon^{(k)}$'s --- so that for every $n$ in the support of $\varepsilon^{(1)}$ and every $k\in \N^\star$, we have $\varepsilon^{(k)}_n=\varepsilon^{(1)}_n$.
If there is some $n$ in the support of $\varepsilon^{(1)}$ such that there is some $k_n\in \N^\star$ satisfying $\varepsilon^{(k_n)}_n\not= \varepsilon^{(1)}_n$, then we may change $\gamma$ to have the same initial point, and the same $\Delta(k)$'s except for $\Delta_\text{new}(1):=\Delta_\text{old}(1)-\varepsilon^{(1)}_n n!$ and $\Delta_\text{new}(k_n):=\Delta_\text{old}(k_n)+\varepsilon^{(1)}_n n!$. The new $\gamma$ is still a geodesic path as it is a 1-Lipschitz map from $\N$ to $\mathcal{G}$ that agrees with the geodesic path $\gamma_\text{old}$ on $\{0\}\cup \{k_n,k_n+1,\dots\}$. As a result, if $\gamma$ does not satisfy the desired property, then we may change $(\gamma,(\varepsilon^{(k)}))$ so that $\gamma$ is still a geodesic ray but with an $\varepsilon^{(1)}$ of strictly smaller support. By Fermat's infinite descent, this process must stop at some suitable $\gamma$. Henceforth, we assume that  we work with such a $\gamma$. We may further assume that $\Delta(1)>0$, as $x\mapsto -x$ is a graph automorphism of $\mathcal{G}$.

Let $M$ denote the maximum of the support of $\varepsilon^{(2)}$ --- which is non-empty as $\gamma$ is geodesic. Let $K > \max (M,\Delta(1))$. As $\gamma$ has been suitably modified, one may define a $1$-Lipschitz map $\lambda : \{0,\dots,\frac{K!}{\Delta(1)}\} \to \Cay(\Z,S)$ by setting $\eta(0)=\gamma(0)$ and
$$\forall k \in \{1,\dots,{K!}/{\Delta(1)}\},~\lambda(k)-\lambda(k-1)=\Delta(k)-\Delta(1)+\mathds{1}_{k=2}K!.$$
As $\lambda(1)=\gamma(0)$ and $\lambda({K!}/{\Delta(1)})=\gamma({K!}/{\Delta(1)})$, the fact that $\lambda$ is 1-Lipschitz contradicts the fact that $\gamma$ is geodesic, which ends the proof.
\end{proof}

In order to capture the extension properties of geodesic paths, let us define the generalised diameter of a graph and the generalised radius of a rooted graph.

\vspace{0.3 cm}

Let $\mathcal G$ be a graph. We denote by $\Geod(\mathcal{G})$ the set of the geodesic paths of $\mathcal{G}$ that are well-defined at 0 but not at $-1$. We define a partial order on $\Geod(\mathcal{G})$ by setting, for every  $\gamma$ and $\kappa$ in $\Geod(\mathcal{G})$,
$$
\gamma \leq \kappa \iff \gamma\text{ extends }\kappa.
$$
The \defini{generalised diameter}\label{gendiamdef} of $\mathcal G$ is the smallest ordinal $\eta$ such that there is a way to assign to each element $\gamma$ of $\Geod(\mathcal{G})$ an ordinal $\xi(\gamma)$ such that:
\begin{itemize}
\item for every $\gamma \in \Geod(\mathcal{G})$, $\xi(\gamma)\leq \eta$,
\item and for every $\gamma$ and $\kappa$ in $\Geod(\mathcal{G})$, if $\gamma < \kappa$, then $\xi(\gamma) < \xi(\kappa)$.
\end{itemize}
If some $\eta$ satisfies these conditions, there is a smallest one satisfying them: in that case, the generalised diameter\footnote{which is well-defined and equal to the Krull ordinal of $(\Geod(\mathcal{G}),\leq)$; see Section~2.6 in \cite{bassordinal}} is said to be \defini{transfinite}. If there is no such $\eta$, we define the generalised diameter to be $\infty$, and say that it is \defini{truly infinite}, or \defini{intransfinite}: it is larger than any ordinal number.

If $(\mathcal{G},o)$ is a rooted graph, we set $\Geod(\mathcal{G},o)$ to be the set of the elements of $\Geod(\mathcal{G})$ that start at $o$. By using $\Geod(\mathcal{G},o)$ instead of $\Geod(\mathcal{G})$, one defines the \defini{generalised radius} of $(\mathcal{G},o)$.

Here are a few important remarks.
\begin{itemize}
\item If a graph $\mathcal{G}$ has a vertex $o$ such that the generalised radius of $(\mathcal{G},o)$ is $\infty$, then all its vertices satisfy this property. See Lemma~2.4 of \cite{yadintointon}, which does not rely on any form of local finiteness, only connectedness.
\item The generalised diameter of a graph $\mathcal{G}=(V,E)$ is the supremum over $o\in V$ of the generalised radius of $(\mathcal{G},o)$. In particular, if $\mathcal{G}$ is transitive, then for every vertex $o$ of $\mathcal{G}$, the generalised diameter of $\mathcal{G}$ is equal to the generalised radius of $(\mathcal{G},o)$.
\item The generalised diameter of a graph is finite if and only if its diameter is finite, and in that case the two notions agree.
\item The generalised diameter of a graph is intransfinite if and only if this graph contains a geodesic ray.
\end{itemize}

\begin{exam}
The generalised radius and diameter allow us to capture finely the extension properties of the geodesic paths. For instance, a rooted graph $(\mathcal{G},o)$ has a generalised radius larger than or equal to $\omega 2+5$ if and only if there is a geodesic path $\gamma$ starting at $o$ and of length 5 that can be extended to geodesic paths of arbitrary length that themselves can be extended to geodesic paths of arbitrary length, i.e.
\footnotesize $$
\exists \gamma\in \Geod_5(\mathcal{G},o),~\forall n\geq 5,~\exists \kappa\in \Geod_n(\mathcal{G},o),~\kappa \leq \gamma\text{ \& }\forall m\geq n,~\exists \lambda\in \Geod_m(\mathcal{G},o),~\lambda \leq \kappa$$\normalsize
where $\Geod_n(\mathcal{G},o)$ denotes the set of the elements of $\Geod(\mathcal{G},o)$ of length $n$ and the order $\leq$ has been introduced after the proof of Proposition~\ref{propz}.
\end{exam}

Let us introduce some useful notation. Let $(\mathcal{G},o)$ be a rooted graph. Define by transfinite induction, for every ordinal $\eta$, the set
$$
D_\eta := \{\gamma \in \Geod(\mathcal{G},o): \forall \kappa\in \Geod(\mathcal{G},o),~ \kappa \leq \gamma \implies \exists \xi < \eta,~\kappa\in D_\xi\}.
$$
In words, one can say that we remove from $\Geod(\mathcal{G},o)$ its minimal elements and put them in $D_0$; once this erasure is performed, we remove the minimal elements of the remaining set, and put them, together with the previously erased elements, in $D_1$; etc. The generalised radius of $(\mathcal{G},o)$ is the smallest $\eta$ such that $D_\eta=\Geod(\mathcal{G},o)$ --- which is set to be $\infty$ if no such $\eta$ exists.

If $A$ denotes a set of ordinals, $\sup^+ A$ denotes $\sup \{\eta+1;\eta\in A\}$. It is the unique ordinal $\eta_0$ such that for every ordinal $\eta$, the following equivalence holds:
$$
\eta \geq \eta_0 \iff \forall \xi \in A,~\eta > \xi.
$$
Besides, wet set $\sup^+ A\cup\{\infty\}:=\infty$.

Let $(\mathcal{G},o)$ be a rooted graph. Given two vertices $v$ and $w$, write $v\fleche w$ as an abbreviation for ``$\{v,w\}$ is an edge and $d(o,w)=d(o,v)+1$''. Given a vertex $v$, any two geodesic paths from $o$ to $v$ belong exactly to the same $D_\eta$'s. As for any $v$ there is a geodesic path from $o$ to $v$, one can define the \defini{label} $\ell_v$ of $v$ in $(\mathcal{G},o)$ to be the least $\eta$ such that $\gamma$ belongs to $D_\eta$ --- where $\gamma$ is a geodesic path from $o$ to $v$ and the definition does not depend on the choice of $\gamma$. If no such ordinal exists, set $\ell_v$ to be $\infty$. This labeling is the unique assignment $L$ of an ordinal or $\infty$ to each vertex satisfying the following equation:
$
\forall v,~L_v=\sup^+\{L_w: v\fleche w\}
$.

The generalised radius of a rooted graph $(\mathcal{G},o)$ is equal to $\ell_o$. Recall that if $\mathcal{G}$ is transitive, then the generalised diameter of $\mathcal G$ is the generalised radius of $(\mathcal{G},o)$.

\begin{thm}
For every ordinal $\eta$, there is a Cayley graph of an abelian group the diameter of which is transfinite and equal to $\eta$.
\end{thm}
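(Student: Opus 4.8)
The plan is to build, by transfinite induction on $\eta$, a Cayley graph $\mathcal{G}_\eta$ of an abelian group that contains no geodesic ray and whose generalised diameter equals $\eta$. Since these graphs are transitive, the generalised diameter coincides with the generalised radius at the identity, i.e.\ with the label $\ell_0$ computed from the recursion $\ell_v=\sup^+\{\ell_w:v\fleche w\}$ introduced after Proposition~\ref{propz}. The induction rests on two operations on Cayley graphs of abelian groups that control this label in opposite regimes. Given Cayley graphs $\mathcal{G}_i=\Cay(G_i,S_i)$ with no geodesic ray (so their generalised radii are genuine ordinals $L_i$), and writing $\ell^{(i)}$ for the labelling of $\mathcal{G}_i$, I would use: the \emph{Cartesian product} of \emph{finitely} many factors, $\Cay(G_1\times\cdots\times G_k,\bigcup_j\{0\}\times\cdots\times S_j\times\cdots\times\{0\})$, in which distances add; and the \emph{$\ell^\infty$-product} $\Cay(\bigoplus_i G_i,S)$ where $S$ is the set of finitely supported nonzero vectors with $i$-th entry in $S_i\cup\{0\}$, in which $d(g)=\max_i d_{G_i}(g_i)$. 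Both are genuine Cayley graphs of abelian groups.

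The heart of the argument is two formulas. \emph{Lemma A:} a Cartesian product of finitely many ray-free factors is ray-free and has generalised radius the natural (Hessenberg) sum $L_1\oplus\cdots\oplus L_k$. \emph{Lemma B:} an $\ell^\infty$-product of ray-free factors is ray-free and has generalised radius $\sup_i L_i$. Granting these, the induction is immediate. For $\eta=0$ take the one-point graph. For a successor $\eta=\mu+1$ take the Cartesian product of $\mathcal{G}_\mu$ with $\Cay(\Z/3\Z,\{\pm1\})$ (a triangle, of radius $1$); by Lemma A its radius is $\mu\oplus1=\mu+1$. For a limit $\eta$ take the $\ell^\infty$-product of the family $(\mathcal{G}_\mu)_{\mu<\eta}$; by Lemma B its radius is $\sup_{\mu<\eta}\mu=\eta$. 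Each step preserves ray-freeness, so every generalised diameter produced is a genuine ordinal, hence transfinite.

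The hard part will be Lemma B, both the formula and ray-freeness, because the $\ell^\infty$-product is very highly connected. The key structural observation is that a geodesic from the identity to a vertex at distance $D$ must push one single coordinate straight out to distance $D$: each coordinate projects $1$-Lipschitzly onto its factor, so reaching distance $D$ there costs at least $D$ steps, while only $D$ are available. Such a \emph{leading} coordinate is already moving at the first step and is therefore pinned down by $\gamma(1)$, which has finite support. Ray-freeness then follows by the same support argument used for the earlier examples: an infinite forward path would force one coordinate to be advanced at \emph{every} step, i.e.\ a geodesic ray inside a single factor, contradiction. For the radius I would show that a \emph{lagging} coordinate can never catch up with the leaders and so contributes nothing; concretely, that $f(g):=\max\{\ell^{(i)}_{g_i}:d_{G_i}(g_i)=d(g)\}$ (the maximum over the finitely many leading coordinates) computes $\ell_g$ for every $g$ with $d(g)\ge1$. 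This amounts to checking that $f$ is at once a super-solution of the label recursion (any forward step strictly decreases the factor-label of every surviving leader, so $f$ strictly drops) and a sub-solution (follow a single leader). Plugging this into $\ell_0=\sup^+\{\ell_s:s\in S\}$, where the label of a generator $s$ with support $T$ is $\max_{i\in T}\ell^{(i)}_{s_i}$, and using $\sup^+\{\ell^{(i)}_s:s\in S_i\}=L_i$, gives $\ell_0=\sup^+\bigl(\bigcup_i\{\ell^{(i)}_s:s\in S_i\}\bigr)=\sup_i L_i$.

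Lemma A is the more classical half: the forward digraph of a Cartesian graph product is the product of the forward digraphs, and the foundation rank of such a product is the natural sum of the ranks, which is a standard characterisation of $\oplus$; ray-freeness holds by the pigeonhole principle since only finitely many factors are present. The one subtlety to keep in mind is exactly why the two operations cannot be interchanged: an \emph{infinite} Cartesian product does create geodesic rays by hopping successively into fresh coordinates, while the $\ell^\infty$-product kills these rays through the leading-coordinate constraint but cannot realise a successor (its radius is always a supremum). This is why limits must be handled by the $\ell^\infty$-product and successors by a finite Cartesian product, and the main technical risk is the careful verification that the max-metric genuinely prevents two long coordinates from combining, so that Lemma B yields exactly $\sup_i L_i$ rather than some larger natural-sum-type ordinal.
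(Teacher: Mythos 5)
Your proposal is correct and is essentially the paper's own proof: the paper also proceeds by transfinite induction, realising successor ordinals by a Cartesian product with a finite cyclic factor ($\Z/2\Z$ there; your triangle works equally well) and limit ordinals by exactly your $\ell^\infty$-product $\bigl(\bigoplus_{\xi<\eta}G_\xi,\ \{g:\forall\xi<\eta,\ g_\xi\in S_\xi\cup\{0\}\}\bigr)$, with the central point being your Lemma~B, established by the same leading-coordinate super-/sub-solution verification of the recursion $L_v=\sup^+\{L_w:v\fleche w\}$. The only minor differences are that the paper proves the successor step by checking $\ell^{\eta}_{(g,x)}=\ell^{\xi}_g+\mathds{1}_{x=0}$ directly rather than invoking Hessenberg sums, and that ray-freeness (transfiniteness of the labels) comes for free from the uniqueness of ordinal-valued solutions of the label equation rather than from your separate finite-support argument.
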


\begin{proof}
For every ordinal $\eta$, let us define some abelian group $G_\eta$ and a generating subset $S_\eta$ of $G_\eta$. We do so by transfinite induction, by using the following rule:
\begin{itemize}
\item if $\eta$ can be written as $\xi+1$, then set $G_{\eta}:=G_{\xi}\oplus \Z/2\Z$ and $S_{\eta}:=(S_{\xi}\times\{0\})\cup\{(0,1)\}$;
\item otherwise, set $G_\eta := \bigoplus_{\xi<\eta} G_\xi$ and $S_\eta := \{g\in G_\eta:\forall \xi<\eta,~g_{\xi}\in S_{\xi}\}$.
\end{itemize}
Let us prove by transfinite induction that for every $\eta$, the diameter of the Cayley graph $\mathcal{G}_\eta$ of $(G_\eta,S_\eta)$ is transfinite and equal to $\eta$. 
Let $\eta$ be an ordinal number such that for every $\xi<\eta$, the diameter of $\mathcal{G}_{\xi}$ is transfinite and equal to $\xi$. Let us show that the diameter of $\mathcal{G}_{\eta}$ is transfinite and equal to $\eta$.
Denote by $g\mapsto \ell^\eta_g$ the label-map of $(\mathcal{G}_\eta,0)$.

\saut

First, assume that $\eta$ can be written as $\xi+1$. Let us prove that for every $(g,x)\in G_{\eta}=G_{\xi}\oplus \Z/2\Z$, one has $\ell_{(g,x)}^{\eta}=\ell_g^{\xi}+\mathds{1}_{x=0}$. Notice that this implies that $\ell_{(0,0)}^\eta=\ell_0^{\xi}+1=\xi+1=\eta$, i.e.\ that the diameter of $\mathcal{G}_{\eta}$ is transfinite and equal to $\eta$.
To prove the claim, notice that for every $g$ and $h$ in $G_{\xi}$,
\begin{enumerate}
\item \label{un} $(g,0)\fleche (h,0) \iff (g,1)\fleche (h,1) \iff g\fleche h$;
\item \label{deux}$(g,0)\fleche (h,1)\iff g=h$;
\item \label{trois} $(g,1)\not\fleche (h,0)$.
\end{enumerate}
For $(g,x)\in G_\eta$, let $\ell_{(g,x)}:=\ell_g^{\xi}+\mathds{1}_{x=0}$. What we need to prove is that for every $(g,x)\in G_\eta$, one has $\ell_{(g,x)}=\sup^+\{\ell_{(h,y)}:(g,x)\fleche (h,y)\}$.
Let $(g,x)\in G_\eta$.
If $x=1$, then by \ref{un} and \ref{trois}, one has $$\ell_{(g,x)}=\ell^\xi_g=\text{sup}^+\{\ell_h^\xi:g\fleche h\}=\text{sup}^+\{\ell_{(h,y)}:(g,x)\fleche (h,y)\}.$$
If $x=0$, then by \ref{deux}, one has $$\ell_{(g,x)}=\ell^\xi_g+1=\text{sup}^+\{\ell_{(g,1)}\} \leq \text{sup}^+\{\ell_{(h,y)}:(g,x)\fleche (h,y)\}.$$ Still assuming $x=0$, it remains to show that $$\ell_{(g,x)}\geq\text{sup}^+\{\ell_{(h,y)}:(g,x)\fleche (h,y)\}.$$ In other words, let $(h,y)$ be such that $(g,0)\fleche (h,y)$ and let us show that $\ell_{(g,0)}> \ell_{(h,y)}$. If $y=0$, then by \ref{un}, one has $g\fleche h$, so that $\ell_{(g,0)}= \ell^\xi_g+1>\ell^\xi_h+1= \ell_{(h,y)}$. If $y=1$, then by \ref{deux}, one has $g=h$, so that $\ell_{(g,0)}= \ell^\xi_g+1>\ell^\xi_g= \ell_{(h,y)}$.


\saut

Now, assume that $\eta$ is a limit ordinal. Let us prove that for every $g\in G_\eta = \bigoplus_{\xi<\eta} G_\xi$, one has $\ell_{g}^\eta=\sup\{\ell_{g_\xi}^\xi:\xi\in [0,\eta)\text{ and }d(0,g_\xi)=d(0,g)\}$. Notice that this implies that $\ell_0^\eta= \sup \{\ell_{0}^\xi:\xi<\eta\} = \sup \{\xi : \xi < \eta\}=\eta$, i.e.\ that the diameter of $\mathcal{G}_{\eta}$ is transfinite and equal to $\eta$. For $g\in G_\eta$, let $\ell_g:= \sup\{\ell_{g_\xi}^\xi:\xi\in [0,\eta)\text{ and }d(0,g_\xi)=d(0,g)\}$. What we need to prove is that for every $g\in G_\eta$, one has $\ell_{g}=\sup^+\{\ell_h:g\fleche h\}$. Let $g\in G_\eta$. First, let us show that $\ell_g \geq \sup^+\{\ell_h:g\fleche h\}$, i.e.\ let $h\in G_\eta$ be such that $g\fleche h$ and let us prove that $\ell_g >\ell_h$. As $g\fleche h$, the element $h$ cannot be 0. Thus, by definition of $(G_\eta,S_\eta)$, the set $E_h:=\{\xi\in [0,\eta)\text{ and }d(0,h_\xi)=d(0,h)\}$ is finite. As a result, it is sufficient to establish that for every $\xi\in E_h$, one has $\ell_g>\ell_{h_\xi}^\xi$. Let $\xi \in E_h$. As $d(0,h)=d(0,h_\xi)$ and $g\fleche h$, by definition of $(G_\eta,S_\eta)$, one has $d(0,g)=d(0,g_\xi)=d(0,h)-1$. Thus, one has $g_\xi\fleche h_\xi$ and $\ell_g \geq \ell_{g_\xi}^\xi>\ell_{h_\xi}^\xi$. As a result, the inequality $\ell_g \geq \sup^+\{\ell_h:g\fleche h\}$ holds.

Let us now establish that $\ell_g \leq \sup^+\{\ell_h:g\fleche h\}$. Let thus $\xi\in [0,\eta)$ be such that $d(0,g_\xi)=d(0,g)$ and let us prove that $\ell_{g_\xi}^\xi\leq \sup^+\{\ell_h:g\fleche h\}$. Recall that $\ell_{g_\xi}^\xi=\sup^+\{\ell_{\tilde h}^\xi:g_\xi \fleche \tilde h\in G_\xi\}$. By definition of $\ell_h$ and as $g_\xi \fleche {\tilde h} \implies \exists h\in G_\eta,~h_\xi={\tilde h}~\&~g\fleche h$, one has $\sup^+\{\ell_{\tilde h}^\xi: g_\xi \fleche \tilde h\in G_\xi\}\leq \sup^+\{\ell_h:g\fleche h\}$. As a result, the inequality $\ell_{g_\xi}^\xi\leq \sup^+\{\ell_h:g\fleche h\}$ holds, and the proof is complete.
\end{proof}

\begin{prop}
Let $\mathcal{G}$ be a Cayley graph of an abelian group. Assume that $\mathcal{G}$ contains a geodesic ray. Then, $\mathcal{G}$ contains a bi-infinite geodesic ray.
\end{prop}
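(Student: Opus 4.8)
The plan is to reduce the statement to a purely combinatorial fact about the increments of the ray, and then to let commutativity turn a one-sided geodesic into a two-sided one. Write the group additively, let $\gamma:\N\to G$ be the given geodesic ray, and set $\|g\|:=d(0,g)$. Translating by $-\gamma(0)$ (a graph automorphism), I may assume $\gamma(0)=0$. Put $s_k:=\gamma(k)-\gamma(k-1)\in S\cup(-S)$ for $k\geq1$, so that $\gamma(n)=s_1+\dots+s_n$. Because $G$ is abelian, the sum of the increments over any finite index-set depends only on the underlying \emph{multiset} of values; and ``$\gamma$ is geodesic'' means exactly that every contiguous block satisfies $\|s_{m+1}+\dots+s_n\|=n-m$. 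Dually, a bi-infinite geodesic through $0$ is precisely a family $(t_k)_{k\in\Z}$ valued in $S\cup(-S)$ all of whose contiguous blocks $t_{p+1}+\dots+t_q$ have norm $q-p$. This is the target object.

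Here is the key reduction. Suppose I can find an infinite family of increments $(s_{k_i})_{i\in\N}$, with $k_1<k_2<\dots$, such that \emph{every} finite subfamily (indexed by distinct $i$'s) has norm equal to its cardinality — not merely the contiguous ones. Then, choosing any bijection $\beta:\Z\to\{k_i:i\in\N\}$ and setting $t_j:=s_{\beta(j)}$, every contiguous block of $(t_j)_{j\in\Z}$ is such a finite subfamily; by commutativity its sum is multiset-determined, hence of full norm. Thus $(t_j)_{j\in\Z}$ is a bi-infinite geodesic and we are done. This step is clean, and it is where the abelian hypothesis is used decisively: commutativity allows an arbitrary reindexing of a ``free enough'' set of increments to serve as a two-sided geodesic.

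Producing such a family is the heart of the matter, and the step I expect to be the main obstacle — it is exactly the place where the compactness of Fact~\ref{observa} is unavailable, since $S$ may be infinite. The natural attack is an iterated Ramsey argument: for each $r$, colour the $r$-element subsets $F\subseteq\N^\star$ by $\|\sum_{k\in F}s_k\|\in\{0,1,\dots,r\}$ (finitely many colours), pass to nested infinite monochromatic sets, and diagonalise to an infinite $K$ on which every $r$-subset has a fixed norm $c_r$. The crux is then to prove $c_r=r$ for all $r$, i.e.\ that the stabilised lengths are maximal rather than compressed; this is the delicate point, where the geodesic property of $\gamma$ (fullness of \emph{contiguous} blocks) must be converted, via commutativity, into fullness of sufficiently spread-out subfamilies.

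As a safeguard and as the locally-finite-flavoured core of the argument, I would first settle the case in which the increments take only finitely many values. There the target can be reached by a direct diagonal extraction on the recentred paths $\mu_n(k):=\gamma(n+k)-\gamma(n)$: each $\mu_n$ is a geodesic path on $\{-n,-n+1,\dots\}$ through $0$ (every block of $\mu_n$ is a block of $\gamma$), the finitely-many-values hypothesis lets each coordinate be pinned down along a subsequence exactly as in Fact~\ref{observa}, and any accumulation point is bi-infinite and geodesic by Fact~\ref{prelem}, mirroring Lemma~\ref{closedlemma}. The remaining, genuinely non-compact case of infinitely many increment values is the one where the Ramsey extraction above is needed and where commutativity must substitute for local finiteness; establishing $c_r=r$ in that regime is the principal difficulty of the proof.
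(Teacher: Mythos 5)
Your reduction step is correct, and it is a genuinely different (and nice) use of commutativity than the paper's: if one can find infinitely many increments such that \emph{every} finite subfamily has norm equal to its cardinality, then reindexing those increments by any bijection with $\Z$ does yield a bi-infinite geodesic. But the proposal stops exactly where the proof has to happen. You never establish $c_r=r$ --- the fullness of spread-out subfamilies --- and you yourself flag it as ``the delicate point'' and ``the principal difficulty of the proof.'' What you have is therefore a correct reduction together with an unexecuted plan for verifying its hypothesis; the Ramsey extraction does not help, since stabilising the subset-norms says nothing about the stabilised value being maximal. (The finitely-many-values case you do settle is the regime where locally-finite-style compactness survives; it does not touch the general case, which is the entire content of the proposition.)

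The claim you are missing is true and has a two-line proof, with no Ramsey theory and no extraction: it is precisely the substitution argument you allude to (``converted, via commutativity, into fullness of sufficiently spread-out subfamilies'') but never carry out. Let $k_1<\dots<k_r$ and suppose $\bigl\|\sum_{i=1}^{r}s_{k_i}\bigr\|=q<r$, say $\sum_{i=1}^{r}s_{k_i}=u_1+\dots+u_q$ with each $u_j\in S\cup(-S)$. Since $G$ is abelian,
\[
\gamma(k_r)-\gamma(k_1-1)=\sum_{k=k_1}^{k_r}s_k=\sum_{j=1}^{q}u_j+\sum_{\substack{k_1\le k\le k_r\\ k\notin\{k_1,\dots,k_r\}}}s_k,
\]
which is a sum of $q+(k_r-k_1+1-r)<k_r-k_1+1$ elements of $S\cup(-S)$, contradicting $d(\gamma(k_1-1),\gamma(k_r))=k_r-k_1+1$. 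Hence \emph{every} finite subfamily of increments automatically has full norm, your reduction applies to the whole family (any bijection $\beta:\Z\to\N^\star$ works), and the proof closes --- so the Ramsey step is not merely incomplete but superfluous. For comparison, the paper arranges matters so that this subfamily statement is never needed: it interleaves, defining $\kappa$ so that the even-indexed increments run forward and the odd-indexed ones run backward from the origin; then $\kappa(n)-\kappa(-n)=\gamma(2n)-\gamma(0)$ is a \emph{contiguous} block of $\gamma$, hence of norm $2n$, and since $\kappa$ is a path the triangle inequality upgrades these symmetric equalities to $d(\kappa(m),\kappa(n))=|m-n|$ for all $m,n$. Both routes use commutativity in the same way (rearranging a contiguous block of $\gamma$); the paper's interleaving is chosen precisely so that only contiguous blocks ever appear, which is what lets it bypass the claim on which your attempt stalls.
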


\begin{proof}
Let $\gamma$ denote a geodesic ray. For $n\in \N$, set $s_n:=\gamma(n+1)-\gamma(n)$. Define $\kappa:\Z \to V$ as follows:
\begin{itemize}
\item for every $n\geq 0$, set $\kappa(n):=\sum_{0\leq k <n} s_{2k}$;
\item for every $n<0$, set $\kappa(n):=\sum_{0\leq k <-n} s_{2k+1}$.
\end{itemize}
Note that $\kappa$ is a path. For every $n\in \N$, one has
$$
\kappa(n)-\kappa(-n)=\sum_{0\leq k \leq 2n-1} s_k=\gamma(2n)-\gamma(0).
$$
As $\gamma$ is geodesic, the path $\kappa$ is also geodesic.
\end{proof}

For further use, say that a  path $\chemin$ is \defini{\zigzagfree~} if for every $k$ such that $\chemin(k)$ and $\chemin(k+2)$ are both defined, there is a unique path of length 2 connecting $\chemin(k)$ and $\chemin(k+2)$.

\begin{prop}
Lemma~\ref{closedlemma} does not hold if Condition~\ref{locfi} is removed, even if $\mathcal{G}$ is assumed to be a Cayley graph of an abelian group and $\mathfrak{P}$ is assumed to be invariant under $\chemin \mapsto (k\mapsto \chemin(-k))$.
\end{prop}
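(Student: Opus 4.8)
The plan is to reuse an example already in hand: a Cayley graph of an abelian group that has infinite diameter but contains no geodesic ray. Concretely, I would let $\mathcal{G}$ be the graph of Proposition~\ref{propz} (the Cayley graph of $\Z$ relative to the factorial-based generating set), and set $\mathfrak{P} := \Geod(\mathcal{G})$, the set of \emph{all} geodesic paths of $\mathcal{G}$. Since $\mathcal{G}$ is a transitive Cayley graph of an abelian group, the group-theoretic requirement of the statement is met automatically, and $\Geod(\mathcal{G})$ will turn out to carry the required reversal-invariance.

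The point is that Conditions~\ref{condinv}, \ref{condparam} and \ref{condclos} of Lemma~\ref{closedlemma}, together with the extra invariance under $\chemin \mapsto (k\mapsto \chemin(-k))$, all come for free from Fact~\ref{prelem}: the geodesic paths form a closed subset of the space of paths that is invariant under reparametrisation (hence under integer shifts and under $k\mapsto -k$) and under graph automorphisms. Condition~\ref{condlong} (paths of arbitrarily large length) follows from the infinite diameter of $\mathcal{G}$ established in Proposition~\ref{propz}: for every $n$ there are vertices at distance at least $n$, and a geodesic connecting them has length at least $n$. Condition~\ref{locfi} is precisely the hypothesis we are removing.

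It then remains to check that $\mathfrak{P}$ contains no bi-infinite path. A bi-infinite element of $\mathfrak{P}$ is a bi-infinite geodesic path $\chemin:\Z\to V$, and its restriction to $\N$ is a geodesic ray of $\mathcal{G}$. But Proposition~\ref{propz} guarantees that $\mathcal{G}$ has no geodesic ray, so no such $\chemin$ can exist. Thus $\mathfrak{P}=\Geod(\mathcal{G})$ satisfies Conditions~\ref{condinv}--\ref{condlong} and the reversal-invariance, yet contains no bi-infinite path, which is exactly the failure of Lemma~\ref{closedlemma} once Condition~\ref{locfi} is dropped.

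There is essentially no hard step here; the whole argument is the matter of choosing the right $\mathfrak{P}$. The one subtlety worth flagging is the final one: one must distinguish ``bi-infinite geodesic path'' from ``geodesic ray'' and observe that the former restricts to the latter, so that the \emph{absence} of geodesic rays rules out bi-infinite geodesics. I would also remark explicitly that this does not clash with Proposition~\ref{yesabelian}: that result applies only to graphs that already contain a geodesic ray, whereas the construction deliberately uses a graph that contains none.
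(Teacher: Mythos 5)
Your proof is correct, but it takes a genuinely different route from the paper's. You reuse the graph of Proposition~\ref{propz} (infinite diameter, no geodesic ray) and take $\mathfrak{P}=\Geod(\mathcal{G})$, so that Conditions~\ref{condinv}, \ref{condparam}, \ref{condclos} and the reversal-invariance all follow from Fact~\ref{prelem}, Condition~\ref{condlong} follows from the infinite diameter, and the conclusion of Lemma~\ref{closedlemma} fails because a bi-infinite geodesic path would restrict on $\N$ to a geodesic ray, which the graph does not have; your closing remarks (the restriction step, and the non-conflict with Proposition~\ref{yesabelian}) address exactly the right points. The paper instead works in $\Cay\bigl(\bigoplus_{n\geq 2}\Z/n\Z, S\bigr)$ with $S$ the elements having a single nonzero coordinate equal to $1$ --- a graph that \emph{does} contain geodesic rays and bi-infinite paths --- and builds a bespoke $\mathfrak{P}$ from an automorphism-invariant edge-labelling (the label of an edge being the coordinate where its endpoints differ): paths whose consecutive edge labels differ by exactly $1$ and whose labels two edges apart are distinct. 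That $\mathfrak{P}$ contains a one-sided infinite ray, yet no bi-infinite path, since along a bi-infinite element the label sequence would be a map $\Z\to\N$ of the form $k\mapsto \pm k+b$, which cannot exist. The comparison: your argument is shorter and leverages the hard work already done in Proposition~\ref{propz}; the paper's counterexample is sharper in what it demonstrates, namely that the lemma fails even when the ambient graph has bi-infinite geodesics and even when $\mathfrak{P}$ itself contains infinite (one-sided) paths --- so the obstruction is not the absence of infinite objects but the impossibility, without compactness, of recentring arbitrarily long paths into a bi-infinite one. In your example $\mathfrak{P}$ contains no infinite path at all, so it exhibits the failure in a more degenerate form.
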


\vspace{0.4 cm}

\begin{proof}
Let $\mathcal{G}$ denote the Cayley graph of $G := \bigoplus_{n\geq 2} \Z/n\Z$ relative to the set consisting of the elements of $G$ of which all entries are zero but precisely one which is equal to 1. Label each edge by the index of the coordinate where its extremities disagree. This labelling is preserved by every automorphism of $\mathcal{G}$, as for any $n\geq 3$, an edge is labelled $n$ if and only if there is a \zigzagfree~path of length $n-1$ connecting its extremities --- and an edge has label 2 if and only if its label is not larger than or equal to 3. Denote the label of an edge $e$ by $N_e$.

Let $\mathfrak{P}$ denote the set of the paths $\chemin$ such that, for every $k$ such that $\chemin(k)$ and $\chemin(k+3)$ are both defined, one has $|N_{\{\chemin(k),\chemin(k+1)\}}-N_{\{\chemin(k+1),\chemin(k+2)\}}|=1$ and $N_{\{\chemin(k),\chemin(k+1)\}}\not= N_{\{\chemin(k+2),\chemin(k+3)\}}$. Since the labelling is preserved by every automorphism, Condition~\ref{condinv} of Lemma~\ref{closedlemma} is satisfied. By definition, $\mathfrak{P}$ is invariant under $\chemin\mapsto (k\mapsto \chemin(\pm k +  k_0))$. Condition~\ref{condclos} holds because $\mathfrak{P}$ is defined by a conjunction of conditions each of which involves finitely many (here four) coordinates. The set $\mathfrak{P}$ contains the infinite path $\N\to G$ mapping $k$ to $(\mathds{1}_{n\leq k+1})_n$, so that Condition~\ref{condlong} also holds.
However, $\mathfrak{P}$ cannot contain a bi-infinite path. Indeed, let $\chemin \in \mathfrak{P}$ be bi-infinite. The map $k\mapsto N_{\{\chemin(k),\chemin(k+1)\}}$ is well-defined from $\Z$ to $\N$ and of the form $k\mapsto ak+b$ for some $a\in\{-1,1\}$ and some $b\in \N$: such a map is contradictory, which ends the proof.
\end{proof}


\section{Weak isomorphy and transitivity}

\begin{prop}\label{nonhausdorff}
Let $G := \bigoplus_{n\geq 2} \Z/n\Z$, and let $S$ denote the (generating) subset of $G$ consisting of the elements of which all entries are zero but precisely one which is equal to 1. Let $G'$ denote $G\oplus \Z$, and let $S'$ denote $(S\times \{0\})\cup \{(0,1)\}$, which is a generating subset of $G'$. Then the Cayley graphs of $(G,S)$ and $(G',S')$ are weakly isomorphic but are not isomorphic.
\end{prop}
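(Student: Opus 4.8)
The plan is to establish the two assertions separately, using throughout the fact that for an abelian group written as a direct sum of cyclic factors, equipped with the generating set made of the standard generator of each factor, the graph distance is additive over coordinates. Thus in $\Cay(G,S)$ one has $d(0,g)=\sum_{n\ge 2} d_n(g_n)$, where $d_n$ is the distance to $0$ in $\Cay(\Z/n\Z,\{1\})$, and similarly in $\Cay(G',S')$ with the extra summand $|m|$ coming from the $\Z$-coordinate. The single observation that drives the weak-isomorphism part is this: inside a ball of radius $r$, any cyclic coordinate $\Z/n\Z$ with $n>2r$ (and likewise the $\Z$-coordinate) can only take values $x$ with $d_n(x)\le r$, and the set of such values is canonically identified with $\{-r,\dots,r\}\subset\Z$ in a way that preserves both the distance to $0$ and the successor relation. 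Hence every coordinate of order $>2r$ — finite or infinite — looks identical inside the ball.

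For weak isomorphism I would fix $r$ and compare the two balls centred at the identity. Both groups have exactly the same finite list of coordinates of order $\le 2r$, namely $\Z/2\Z,\dots,\Z/2r\Z$, and a countably infinite family of coordinates of order $>2r$: for $G$ these are the $\Z/n\Z$ with $n>2r$, while for $G'$ they are the same family together with the $\Z$-factor. I would choose any bijection $\phi$ between these two countably infinite index sets (e.g.\ sending $2r+1$ to the $\Z$-factor and $n$ to $n-1$ for $n\ge 2r+2$, so that every image coordinate again has order $>2r$), and define $\psi\colon B_{\Cay(G,S)}(0,r)\to B_{\Cay(G',S')}(0,r)$ by fixing the low-order coordinates and transporting each high-order coordinate $n$ to $\phi(n)$ through the canonical identification above. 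Then I would check that $\psi$ fixes $0$, preserves word length (so it maps the ball bijectively onto the ball), and carries ``differing by a single generator-step'' to the same in both directions; since both graphs are transitive, this yields isomorphic balls of every radius, i.e.\ weak isomorphism.

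For non-isomorphism I would exhibit a global invariant that distinguishes the two, namely the property of \emph{containing a bi-infinite \zigzagfree geodesic}. In $\Cay(G',S')$ the coordinate line $L=\{k(0,1):k\in\Z\}$ is such a geodesic: it is geodesic because distances add over coordinates, and it is \zigzagfree because the only common neighbour of $0$ and $2(0,1)$ is $(0,1)$. Conversely, suppose $\gamma$ were a bi-infinite \zigzagfree geodesic in $\Cay(G,S)$, with steps $s_k:=\gamma(k+1)-\gamma(k)\in\{\pm e_n:n\ge 2\}$. Being geodesic rules out $s_{k+1}=-s_k$; and the \zigzagfree condition at $k$ forces the two common neighbours $\gamma(k)+s_k$ and $\gamma(k)+s_{k+1}$ of $\gamma(k)$ and $\gamma(k+2)$ to coincide, hence $s_k=s_{k+1}$ for every $k$. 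Then $\gamma$ advances by a single fixed generator $s=\pm e_n$, so $\gamma(n)=\gamma(0)$ because $s$ has finite order $n$ — contradicting $d(\gamma(0),\gamma(n))=n$. Thus $\Cay(G,S)$ has no such geodesic. Since a graph isomorphism preserves distances and the number of length-$2$ paths between any two vertices, it preserves \zigzagfree geodesics, so the property is an isomorphism invariant and the two graphs cannot be isomorphic.

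The main obstacle is bookkeeping in the weak-isomorphism step: one must verify that $\psi$ preserves adjacency in \emph{both} directions, with particular care near the boundary of the ball, where a generator-step in a high-order coordinate can leave the ball and must be matched correctly under $\phi$ — this is exactly where the identification of $\{x:d_n(x)\le r\}$ with $\{-r,\dots,r\}$ does the work. By contrast the non-isomorphism step is short once the right invariant is isolated; the only point requiring a word of justification is that ``contains a bi-infinite \zigzagfree geodesic'' really is preserved by isomorphisms, which follows immediately since isomorphisms are isometries that preserve counts of length-$2$ connections.
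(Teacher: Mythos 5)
Your non-isomorphism half is correct and is exactly the paper's argument: the distinguishing invariant is the existence of a bi-infinite \zigzagfree~geodesic path, which $\Cay(G',S')$ possesses (the $\Z$-coordinate line) and $\Cay(G,S)$ does not. Your derivation --- zigzag-freeness forces $s_k=s_{k+1}$ for all $k$, so the path advances by a fixed generator of finite order, contradicting geodesy --- supplies details that the paper merely asserts, and your justification that the invariant is preserved by isomorphisms is sound.

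The weak-isomorphism half, however, contains a genuine error: the cutoff ``order $>2r$'' is off by one, and with it your map $\psi$ is \emph{not} an isomorphism of balls. The driving observation --- that for $n>2r$ the set $\{x\in\Z/n\Z : d_n(x)\le r\}$ is identified with $\{-r,\dots,r\}\subset\Z$ preserving the successor relation --- fails precisely at $n=2r+1$: there the ball of radius $r$ is the \emph{entire} cycle $\Z/(2r+1)\Z$, and its wraparound edge, joining $r$ to $r+1=-r$, has no counterpart inside $\{-r,\dots,r\}$. This is not a harmless boundary technicality, because your bijection $\phi$ sends exactly this coordinate to the $\Z$-factor. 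Concretely, for $r\ge 1$ let $e_{2r+1}\in G$ denote the generator of the $\Z/(2r+1)\Z$ coordinate; then $g:=r\,e_{2r+1}$ and $h:=-r\,e_{2r+1}$ both lie in $B_{\Cay(G,S)}(0,r)$ (each is at distance exactly $r$ from $0$) and are adjacent, since $g-h=2r\,e_{2r+1}=-e_{2r+1}$; yet $\psi(g)$ and $\psi(h)$ are the elements of $G'$ whose $\Z$-coordinates are $r$ and $-r$ and whose other coordinates vanish, and these are not adjacent in $\Cay(G',S')$. So $\psi$ preserves distance to the centre but not adjacency, and the proof as written breaks down.

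The repair is a one-line change of threshold, and it is exactly the choice made in the paper: leave every coordinate of order $\le 2r+1$ untouched, and shift only the coordinates of order $\ge 2r+2$, sending $2r+2$ to the $\Z$-factor and $n$ to $n-1$ for $n\ge 2r+3$. Then no coordinate visible inside the ball of radius $r$ can wrap around, and your two-sided verification of adjacency goes through. The paper packages this same idea via quotients rather than an explicit ball map: both groups are realised as quotients of $G_0=\bigoplus_{n\ge 2}\Z$ (with its standard generating set $S_0$) by the subgroups $H=\bigoplus_{n\ge 2}n\Z$ and $H_N=\bigoplus_{n\ge 2}k_{N,n}\Z$, where $k_{N,n}=n$ for $n<2N+2$, $k_{N,2N+2}=0$, and $k_{N,n}=n-1$ for $n>2N+2$; since $H$ and $H_N$ meet the ball of radius $2N+1$ of $\Cay(G_0,S_0)$ in the same set, the two quotient graphs have isomorphic balls of radius $N$. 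Note that the paper's cutoff $2N+2$ is precisely the corrected one.
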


\begin{proof}
Let $(G,S)$ and $(G',S')$ be as in the statement of the proposition. Let $\mathcal{G}$ denote $\Cay(G,S)$ and $\mathcal{G}'$ denote $\Cay(G',S')$.

First, let us prove that $\mathcal{G}$ and $\mathcal{G}'$ are weakly isomorphic. Let $G_0$ denote $\bigoplus_{n\geq 2} \Z$, and let $S_0$ be the (generating) subset of $G$ consisting of the elements of which all entries are zero but precisely one which is equal to 1. Let $H:=\bigoplus_{n\geq 2} n\Z \subset G_0$. One can realise $(G,S)$ as $(G_0/H,\overline{S_0})$. Let $N\geq 0$, and let $H_N:=\bigoplus_{n\geq 2} k_{N,n}\Z \subset G_0$, where $k_{N,n}$ is equal to $n$ if $n<2N+2$, 0 if $n=2N+2$ and $n-1$ otherwise. One can realise $(G',S')$ as $(G_0/H_N,\overline{S_0})$. As $H_N$ and $H$ have the same intersection with the ball of radius $2N+1$ and centre 0 in $\Cay(G_0,S_0)$, the transitive graphs $\mathcal{G}$ and $\mathcal{G}'$ have isomorphic balls of radius $N$. Since $N$ is arbitrary, $\mathcal{G}$ and $\mathcal{G}'$ are weakly isomorphic.

Now, let us show that $\mathcal{G}$ and $\mathcal{G}'$ are not isomorphic. Recall that we say that a bi-infinite geodesic path $\gamma$ is \zigzagfree~if for every $n\in \Z$, there is a unique geodesic path between $\gamma(n)$ and $\gamma(n+2)$.
Since $\mathcal{G}$ does not admit a \zigzagfree~bi-infinite geodesic path while $\mathcal{G}'$ does, these graphs cannot be isomorphic.
\end{proof}

\begin{prop}
There is a graph that is weakly transitive and weakly isomorphic to a transitive graph, but not transitive.
\end{prop}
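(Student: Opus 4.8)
The plan is to assemble the very same system of balls as the transitive graph $\mathcal{G}'=\Cay(G',S')$ of Proposition~\ref{nonhausdorff}, but in a globally inhomogeneous way. Recall from that proposition that $\mathcal{G}$ and $\mathcal{G}'$ are weakly isomorphic, so that for every radius $r$ they share one and the same ball $B_r$ (up to rooted isomorphism), and that $\mathcal{G}'$ carries \zigzagfree~bi-infinite geodesics whereas $\mathcal{G}$ carries none — a discrepancy invisible at any finite radius, because the infinitely many large cyclic coordinates of $\mathcal{G}$ already furnish arbitrarily long straight directions, and one extra genuinely free direction cannot be told apart from them within a bounded ball. My target graph $\mathcal{H}$ should satisfy two properties: (i) every ball $B_{\mathcal H}(v,r)$, at every vertex $v$ and every radius $r$, is isomorphic to $B_r$; and (ii) the set $A:=\{v:v\text{ lies on a }\zigzagfree\text{ bi-infinite geodesic}\}$ is a proper, non-empty subset of the vertex set. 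Property (i) makes $\mathcal{H}$ weakly transitive (all balls are mutually isomorphic) and weakly isomorphic to the transitive graph $\mathcal{G}'$ (they are isomorphic to $\mathcal{G}'$'s balls). Property (ii) makes $\mathcal{H}$ non-transitive: every automorphism preserves geodesics and the uniqueness of geodesics between two points, exactly as in Proposition~\ref{nonhausdorff}, so it leaves $A$ globally invariant, whence $A$ and its complement are distinct non-empty orbits.

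For the construction itself I would start from the tube $\mathcal{G}\,\square\,\Z=\mathcal{G}'$, single out one bi-infinite line $L:=\{0\}\times\Z$, and deform the graph away from $L$ so as to destroy every \zigzagfree~bi-infinite geodesic except the one supported on $L$, while leaving all finite balls unchanged up to isomorphism. Concretely, I would reuse the realisations $\mathcal{G}=\Cay(G_0/H,\overline{S_0})$ and $\mathcal{G}'=\Cay(G_0/H_N,\overline{S_0})$ from the proof of Proposition~\ref{nonhausdorff}, with $G_0=\bigoplus_{n\geq 2}\Z$, and replace the single global subgroup by a position-dependent pattern of periods: a vertex should be allowed to see a genuinely free (infinite) direction within radius $r$ only when it lies close enough to $L$, every other direction visible within that radius being cyclic of period large compared with $r$. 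The guiding principle, borrowed directly from the weak isomorphism of $\mathcal{G}$ and $\mathcal{G}'$, is that within a fixed radius one cannot distinguish a free direction from a cyclic direction of large period, nor one such straight direction from another; this is what should keep the local picture equal to $B_r$ at every vertex, while the free direction persists all the way to infinity only along $L$, so that $L$ witnesses $A\neq\varnothing$ and a vertex far from $L$ witnesses $A\neq V$.

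The main obstacle is precisely the verification of (i): guaranteeing that the position-dependent deformation leaves every ball isomorphic to the common ball $B_r$. The danger is that letting the \emph{straightness radius} of a direction depend on the vertex introduces, at some vertex and some radius, a wrapping or a branching that $B_r$ does not contain, thereby breaking homogeneity. I would control this by prescribing the periods so that, seen from any vertex, the profile of directions met within radius $r$ — which finite cycles occur and with what multiplicities, together with an infinite supply of straight directions — coincides exactly with that of $B_r$, using the same radius-$2r$ comparison of period patterns that underlies the weak isomorphism in Proposition~\ref{nonhausdorff}. Granting (i), property (ii) becomes the routine part of the argument: by design the only direction that stays straight at every radius is the one along $L$, so the vertices of $L$ lie on a \zigzagfree~bi-infinite geodesic while the vertices sufficiently far from $L$ lie on none, and the invariance of $A$ then yields that $\mathcal{H}$ is weakly transitive, weakly isomorphic to the transitive graph $\mathcal{G}'$, yet not transitive.
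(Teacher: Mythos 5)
Your logical skeleton is sound: a graph $\mathcal{H}$ satisfying your (i) and (ii) would indeed prove the proposition, since (i) yields weak transitivity and weak isomorphism to the transitive graph $\mathcal{G}'$, while (ii) together with the automorphism-invariance of the set $A$ yields non-transitivity. The genuine gap is that $\mathcal{H}$ is never constructed. A ``position-dependent pattern of periods'' is not a defined mathematical object, and step (i) --- which you yourself flag as the main obstacle --- is exactly where the entire content of the proposition lies; saying you would ``prescribe the periods so that the profile of directions coincides exactly with that of $B_r$'' restates the goal rather than establishing it. Moreover, this verification is not routine bookkeeping, because ball-isomorphism at small radii already imposes strong rigidity. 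If every ball of radius $2$ in $\mathcal{H}$ is isomorphic to $B_2$, then at each vertex the edges fall into ``directions'' (two edges spanning no $4$-cycle are parallel, and any two non-parallel edges span a unique $4$-cycle, with at most two edges per parallelism class); consequently a locally straight (\zigzagfree) segment extends uniquely and transports, square by square, across any transverse edge. This forced ladder structure is precisely what resists your deformation: a fibre of the special direction that is an infinite line over $L$ cannot sit next to a fibre that closes into a finite cycle without the rungs failing to match up or piling up infinitely many edges at a vertex of the cycle, in conflict with the $B_2$-structure. So the period cannot simply ``grow with the distance to $L$''; whether any graph with your properties (i) and (ii) exists at all is a delicate global consistency (holonomy) problem, and the proposal does not begin to address it.

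The paper avoids this difficulty entirely by not deforming $\mathcal{G}'$ in place: it glues disjoint copies of $\mathcal{G}$ and $\mathcal{G}'$ along a tree. Vertices are finite words whose first letter lies in $G$ and whose subsequent letters lie in $G'$, and a pendant copy of $\mathcal{G}'$ is attached to each vertex by a bridge edge. Weak transitivity, and weak isomorphism to the transitive free product of $\mathcal{G}'$ with $K_2$, then follow immediately from the weak isomorphism of $\mathcal{G}$ and $\mathcal{G}'$: the copies meet only along bridges, so no compatibility between overlapping local charts ever has to be checked. Non-transitivity is equally robust: the bridges are exactly the edges whose removal disconnects the graph, hence every automorphism permutes the bridge-free components; the component of $[0_G]$ is isomorphic to $\mathcal{G}$ and that of $[0_G;0_{G'}]$ to $\mathcal{G}'$, and these are non-isomorphic (by the very \zigzagfree~geodesic invariant you invoke), so no automorphism maps the one vertex to the other. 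The lesson is that the inhomogeneity should be carried by a rigid combinatorial pattern built from two locally indistinguishable but globally different graphs, rather than by a continuously varying geometric parameter whose local constancy is forced by the homogeneity you are trying to preserve.
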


\begin{proof}
We use the notation of the proof of Proposition~\ref{nonhausdorff}. Let $V$ denote the set of non-empty finite words over the alphabet $G\cup G'$ such that the following two conditions hold:
\begin{itemize}
\item the first letter and only the first letter belongs to $G$;
\item only the first and the last letter of the word are allowed to be equal to $0_G$ or $0_{G'}$.
\end{itemize}
This is the vertex-set. Two vertices are declared to be adjacent if and only if one of the following (incompatible) conditions holds:
\begin{enumerate}
\item \label{ordredeux} one word can be written as the other one followed by $0_{G'}$;
\item \label{ggprime} one word can be written as the other one with the last letter modified by adding to it an element of $S$ or $S'$.
\end{enumerate}
This ``graph'' indeed is non-empty and connected.
As $\mathcal{G}$ and $\mathcal{G}'$ are weakly transitive, the graph $\mathcal{H}$ we have built is weakly transitive, and weakly isomorphic to the free product of $\mathcal{G}$ (or $\mathcal{G}'$) with the graph $K_2=(\{0,1\},\{\{0,1\}\})$. In particular, it is weakly transitive and weakly isomorphic to a transitive graph.

Let us now show that $\mathcal{H}$ is not transitive. Notice that removing an edge of $\mathcal{H}$ leaves its endpoints in different connected components if and only if it exists due to Condition~\ref{ordredeux}. As a result, a graph automorphism of $\mathcal{H}$ must map every connected component of $\mathcal{H}'$ onto a connected component of $\mathcal{H}'$, where $\mathcal{H}'$ stands for the ``graph'' with vertex-set $V$ and where only the edges due to Condition~\ref{ggprime} are kept. As the connected component of $[0_G]$ is isomorphic to $\mathcal{G}$ and that of $[0_G;0_{G'}]$ is isomorphic to $\mathcal{G}'$, and as $\mathcal{G}$ and $\mathcal{G}'$ are not isomorphic, no automorphism of $\mathcal H$ maps $[0_G]$ to $[0_G;0_{G'}]$. Thus, $\mathcal{H}$ is not transitive.
\end{proof}

\paragraph{Acknowledgements.} I would like to thank the Weizmann Institute of Science and my postdoctoral hosts --- Itai Benjamini and Gady Kozma --- for the excellent working conditions they have provided to me. I am also grateful to my postdoctoral hosts and Emmanuel Jacob for interesting discussions.

\begin{small}
\newcommand{\etalchar}[1]{$^{#1}$}

\end{small}
\end{document}